\DeclareMathOperator{\FIM}{FIM}
\DeclareMathOperator{\FG}{FG}
\newtheorem{theorem}{Theorem} % 1st argument is your name for it
\newtheorem*{theorem*}{Theorem} % 1st argument is your name for it
\newtheorem{lemma}[theorem]{Lemma}     % 2nd argument is what is printed
\newtheorem{corollary}[theorem]{Corollary}
\newtheorem{proposition}[theorem]{Proposition}
\numberwithin{theorem}{section}
\theoremstyle{definition}
\newtheorem{question}[theorem]{Question}
\newtheorem*{question*}{Question}
\newtheorem{example}[theorem]{Example}
\newtheorem{remark}[theorem]{Remark}
\DeclareMathOperator{\N}{\mathbb{N}}
\begin{document}

\title[The growth of free inverse monoids]{The growth of free inverse monoids}

\author[Kambites]{Mark Kambites}
\address{Department of Mathematics, University of Manchester, Manchester M13 9PL, UK}
\email{mark.kambites@manchester.ac.uk}

\author[Nyberg-Brodda]{Carl-Fredrik Nyberg-Brodda}
\address{School of Mathematics, Korea Institute for Advanced Study (KIAS), Seoul 02455, Republic of Korea} 
\email{cfnb@kias.re.kr}

\author[Szak\'acs]{N\'ora Szak\'acs}
\address{Department of Mathematics, University of Manchester, Manchester M13 9PL, UK}
\email{nora.szakacs@manchester.ac.uk}

\author[Webb]{Richard Webb}
\address{Department of Mathematics, University of Manchester, Manchester M13 9PL, UK}
\email{richard.webb@manchester.ac.uk}

\thanks{The second author is supported by the Mid-Career Researcher Program (RS-2023-00278510) through the National Research Foundation funded by the government of Korea, and by the KIAS Individual Grant (MG094701) at Korea Institute for Advanced Study. Part of the research was carried out while the second author was supported by the Dame Kathleen Ollerenshaw Trust.}

\subjclass[2020]{20M18, 20M05, 05C05}
\date{\today}
%\dedicatory{}

\keywords{Free inverse monoid, growth of semigroups, Munn trees}

\begin{abstract}
We compute the rate of exponential growth of the free inverse monoid of rank $r$ (and hence an upper bound on the corresponding rate for all $r$-generated inverse monoids and semigroups). This turns out to be an algebraic number strictly between the obvious bounds of $2r-1$ and $2r$, tending to $2r$ as the rank tends to infinity. We also find an explicit expression for the exponential growth rate of the number of idempotents, and prove that this tends to $\sqrt{e(2k-1)}$ as $k \to \infty$.
\end{abstract}

\maketitle

\section{Introduction}

\noindent The asymptotic growth rate of finitely generated groups is by now a classical topic in combinatorial and geometric group theory. Its origin lies in independent work by Schwartz \cite{Schwartz1955} and Milnor \cite{Milnor1968} on the growth of the fundamental groups of compact Riemannian manifolds. Finitely generated groups with polynomial growth were eventually characterized by Wolf \cite{Wolf1968} and Gromov \cite{Gromov1981} as being precisely the virtually nilpotent groups. There is a wealth of literature on growth rates of groups, and we refer the reader to \cite{Grigorchuk2014} for an introduction to the subject and its history. Studying the growth of semigroups (and algebras in general) is a very natural problem arising purely from combinatorial considerations, with classic results by Bergman \cite{Bergman1978} and many recent breakthrough results in the area, e.g.\ by Bell \& Zelmanov \cite{Bell2021}.

In the past few years, there has been a flurry of activity in combinatorial and geometric \textit{inverse} semigroup theory. A number of exciting and fundamental results have been proved, including a recent proof of the undecidability of the word problem for one-relation inverse monoids \cite{GrayInv}, as well as work on related algorithmic problems \cite{CFIJAC, GrayNik} and geometric questions \cite{Nora2022}, but many mysteries remain. In particular, the study of the growth of inverse semigroups has not yet played a significant role in this development (cf.\ \S\ref{Subsec:growth-of-inverse-monoids}). Some results concerning e.g.\ (non-)automaticity of free inverse monoids have already been proved using growth \cite{Cutting2001}, and it is therefore conceivable that \textit{algebraic} information can also be extracted from the growth of inverse monoids. 

The first fundamental question that appears in this line of thought is the following: what is the growth of \textit{free} inverse monoids? These are the free objects in the variety of inverse monoids, and can be constructed in several different ways. A direct approach is to construct the free inverse monoid $\FIM_r$ of rank $r$ as a $2r$-generated monoid, subject to some defining relations. These (known) relations all hold in free groups, giving two pieces of information on the exponential growth rate of $\FIM_r$ when $r >1$: it must lie between that of the free group of rank $r$ and that of the free monoid of rank $2r$, i.e.\ between $2r-1$ and $2r$. It is, however, far from obvious where in this interval it should lie.

In this article, we solve this and a closely related problem. Free inverse monoids have an elegant geometric representation, due in its modern form to Munn \cite{Munn1974} but with key ideas going back to work of Scheiblich \cite{Scheiblich1973}, and we exploit this representation to compute the growth rate of the free inverse monoid $\FIM_r$ of rank $r>1$. In \S\ref{Sec:sphere-counting}, we give a closed formula for the size of the spheres in a free inverse monoid. We then analyse this formula asymptotically in \S\ref{Sec:Idempotent-growth} and \S\ref{Sec:General-exponential-growth-rate}. Specifically, in \S\ref{Sec:Idempotent-growth} we use the formula to prove our first result (Theorem~\ref{Thm:growth-of-idempotents}) determining the exponential growth rate of the number of idempotents in $\FIM_r$. This turns out to be exactly
\[
\left(\frac{2r-1}{2r-2}\right)^{r-1} \sqrt{2r-1}.
\]
which approaches $\sqrt{e(2r-1)}$ as the rank $r$ tends to infinity.

\begin{table}
\begin{tabular}{|l|*{6}{c|}}
\hline
Rank $r = $ & 2 & 3 & 4 & 5 & 6 & 7 \\
\hline
\specialrule{.05em}{0em}{.05em}
\hline
\textbf{Growth rate of} $\FIM_r$ $\approx$ & 3.636 & 5.759 & 7.819 & 9.855 & 11.878 & 13.896 \\
\hline
\end{tabular}
\vspace{5pt}
\caption{The exponential growth rate of the free inverse monoid for small ranks $r$, being the largest real root of \eqref{Eq:main-polynomial} for $p=2r-1$.}
\label{Table:growth-table}
\end{table}

In \S5, we prove our second main result (Theorem~\ref{expg}), determining the exponential growth rate of free inverse monoids. This rate turns out to be an algebraic number, namely the largest real root of the polynomial 
\begin{equation}\label{Eq:main-polynomial}
p^py^{p-2} - (py-1)^{p-1} = 0
\end{equation}
where $p=2r-1$. For $r \leq 7$, these rates are presented in Table~\ref{Table:growth-table}. As $r \to \infty$, the rate tends to $2r$, albeit rather slowly. Our computation does not give any real insight into why this particular polynomial arises. It seems nearly always to be irreducible over $\mathbb{Q}$ (see Remark~\ref{Rem:remark-irr}) and therefore (up to scaling to make it monic) is the minimal polynomial of the growth rate, making it an invariant of free inverse monoids. It would be interesting to know whether there is some more conceptual reason why this polynomial is associated to the free inverse monoid.

\section{Preliminaries}\label{Sec:Preliminaries}

\noindent In this section, we will briefly recall the definition and key properties of free inverse monoids and of growth. We assume the reader is familiar with the rudiments of semigroup theory; we refer the reader to the book by Lawson \cite{Lawson1998} for a detailed introduction to inverse monoids, and particularly \cite[Chapter~6]{Lawson1998} for an introduction to free inverse monoids and Munn trees. Throughout this article, for a finite set $X$ we let $X^\ast$ be the free monoid of all (finite) words over $X$, and we denote by $\FG(X)$ the free group with basis $X$.

\subsection{Growth} Let $M$ be a (not necessarily inverse) monoid $M$ generated (as a monoid) by a finite set $X$. Recall that the \textit{length} of an element $m \in M$ is
the smallest $k$ such that $M$ can be written as a word of length $k$ over the generating set $X$. For $K \in \mathbb{N}$ the \textit{sphere of radius $K$} is the set of elements in $M$ of length exactly $K$, and it is denoted by $S(K)$. The \textit{(spherical) growth function} of $M$ (with respect to $X$) is the function $\gamma \colon \mathbb{N} \to \mathbb{N}$ defined by $K \mapsto |S(K)|$. The \textit{(exponential) growth rate} of $M$ is defined to be the asymptotic exponential growth rate of the function $\gamma$, that is,
\begin{equation}\label{Eq:limsup-definition-growth}
\limsup_{n \to \infty} \gamma(n)^{\frac{1}{n}}.
\end{equation}
Since the growth rate is submultiplicative, i.e.\ $\gamma(m+n) \leq \gamma(m)\gamma(n)$ for all $m, n \in \N$, the limit \eqref{Eq:limsup-definition-growth} always exists. If the limit \eqref{Eq:limsup-definition-growth} is greater than $1$, then $M$ is said to have \textit{exponential growth}. If there are natural numbers $C, d \in \N$ such that $\gamma(n) \leq (Cn)^d$ for all $n \in \N$, then we say that $M$ has \textit{polynomial growth} (of degree $d$, where $d$ is the smallest such). All growth in this article, with the sole exception of the monogenic free inverse monoid treated in \S\ref{Subsec:monogenic} (see Proposition~\ref{Prop:FIM1-growth}), will be exponential. 

\begin{remark}
A natural alternative definition of growth can be given in terms of \textit{balls} rather than spheres; it is well-known and easy to see that this leads to the same value for the exponential growth rate (and increases the degree by one for polynomial growth). We choose to work with spheres because it happens to simplify our calculations.
\end{remark}

\subsection{Free inverse monoids}

Inverse monoids form a variety of algebras, and thus free inverse monoids exist; see \cite[Chapter~6, \S1]{Lawson1998}. We denote the free inverse monoid on $X$ by $\FIM(X)$, and $|X|$ is called the \textit{rank} of $\FIM(X)$. Then $\FIM(X)$ is generated as a monoid by $X \cup X^{-1}$, where $X^{-1}$ is a set in bijective correspondence with $X$ and such that $X \cap X^{-1} = \varnothing$, and we will study its growth with respect to this generating set. If $|X|= k$, we shall sometimes write $\FIM_k$ for $\FIM(X)$, and call this the free inverse monoid of rank $k$. Free inverse monoids have many peculiar properties differentiating them from free groups. For example, Schein \cite{Schein1975} proved that $\FIM_k$ is not finitely presented as a monoid, even when $k=1$ (although it is finitely generated). Indeed, $\FIM_1$ does not have the homological finiteness property $\operatorname{FP}_2$ (and hence neither does any $\FIM_k$ for $k \geq 1$), a strictly weaker property than finite presentability \cite{Gray2021}. Neither does the analogue of the Nielsen--Schreier Theorem hold for $\FIM_k$: there exist non-free inverse submonoids of $\FIM_k$, see \cite{Reilly1972}. Even solving the word problem in $\FIM_k$ is a non-trivial task, see \cite{Scheiblich1973, Munn1974}; this is normally done via \textit{Munn trees}, which we now define.

\subsection{Munn trees}\label{Subsec:munn-tree-intro}

The model of free inverse monoids most commonly used in semigroup theory is due to Munn \cite{Munn1974}, which we briefly recall. This represents elements of $\FIM(X)$ as birooted trees called Munn trees, and also provides a compelling link between $\FIM(X)$ and the Cayley graph $\Gamma_X$ the free group $\FG(X)$. Elements of $\FIM(X)$ are uniquely represented by the pairs $(T,g)$, where $T$ is a connected subgraph (and thus a subtree) of $\Gamma_X$ containing the vertex $1$, and $g$ is a vertex of $T$. The product of two Munn trees $(T_1,g_1)$ and $(T_2,g_2)$ is $(T_1 \cup g_1\cdot T_2, g_1g_2)$, where $g_1\cdot T_2$ denotes the image of $T_2$ under the left action of $g_1$. The Munn tree corresponding to a generator $x \in X \cup X^{-1}$ is the pair $(\langle x \rangle, x)$ where $\langle x \rangle$ is the subgraph of $\Gamma_X$ spanned by the unique edge that $x$ labels from $1$. It follows that the Munn tree represented by a product $w \in (X \cup X^{-1})^\ast$ of generators is the pair $(\langle w \rangle, w_{\FG(X)})$ where $\langle w \rangle$ is the subgraph of $\Gamma_X$ spanned by the unique path that $w$ labels from $1$, and $w_{\FG(X)}$ is the element of $\FG(X)$ represented by $w$ (i.e.\ its freely reduced form), which is also the terminal vertex of this path. Clearly, a Munn tree $(T,g)$ is idempotent if and only if $g=1$.

\subsection{Growth of inverse monoids}\label{Subsec:growth-of-inverse-monoids}

The growth rate of the free inverse monoid of rank $r$ is also automatically an upper bound on the growth rate of any $r$-generated inverse monoid or semigroup. The growth of different classes of inverse monoids has been studied, and although we cannot hope to survey this subject in its entirety here, there are many tantalizing open problems in this area. For example, classifying precisely what the growth of one-relation inverse monoids can be remains an open problem, cf.\ \cite[p.~235]{Shneerson2015}. Furthermore, Schneerson \& Easdown \cite{Shneerson2011} have studied the growth of certain Rees quotients of free inverse monoids; in \cite{Easdown2013} the same authors give sufficient conditions for a one-relation inverse monoid, i.e.\ the quotient of $\FIM(X)$ by some relation $w_1 = w_2$, to have exponential growth. One such sufficient condition is that $|X|=2$ and that the Munn trees of both $w_1$ and $w_2$ contain more than one edge. One natural direction for future research would be to investigate the precise exponential growth rates in these cases: in particular, whether they are always algebraic numbers, as for $\FIM(X)$ (as we shall prove in Theorem~\ref{expg}). We also remark that methods arising from growth were used by Cutting \& Solomon \cite{Cutting2001} to show that $\FIM_1$ is not automatic (and hence neither is $\FIM_r$ for $r \geq 1$), and indeed does not admit a regular language of normal forms. 

\section{The size of spheres}\label{Sec:sphere-counting}

\noindent In order to calculate the growth rate of free inverse monoids, the first step is to determine the size of spheres. This is our goal for the section. We fix some notation and terminology which will be used throughout this section. Let $X$ be a fixed finite set. The unique geodesic in $T$ from $1$ to $g$ is called the \textit{trunk} of the Munn tree $(T,g)$. The edges of $T$ which are not on the trunk will be called \textit{branch edges}. The Munn trees with no trunk edges are precisely the idempotents of $\FIM(X)$.

To count the number of Munn trees with a given length, we begin by counting the number of Munn trees with a given trunk. To do this, we will relate this problem to the \textit{Fuss-Catalan numbers} (also sometimes known as \textit{Raney numbers}). We first recall some results regarding these numbers.

\subsection{Tree diagrams of branching $(p,q)$}

Consider the infinite rooted (undirected) tree $T_{p,q}$ where the root has $q$ children and all other vertices have $p$ children. The finite subtrees containing the root are called \emph{tree diagrams with branching $(p,q)$} -- these are rooted trees where the root has at most $q$ children, and all other vertices have at most $p$ children, but considered as subgraphs of the infinite tree rather than up to isomorphism; for example, the two graphs in Figure~\ref{Fig:two-branching-trees} are two distinct tree diagrams with branching $(1,2)$.
\begin{figure}[h]
	\begin{tikzpicture}
		%first tree
		\draw[thick] (0,0) node[fill=black, circle, scale=0.5]{} -- (1,-1) node[fill=black, circle, scale=0.5]{};
		\draw[thick] (0,0)  -- (-1,-1) node[fill=black, circle, scale=0.5]{};
		\draw[thick] (-1,-1) node[fill=black, circle, scale=0.5]{} -- (-1,-2.4) node[fill=black, circle, scale=0.5]{};
		%second tree
		\draw[thick] (3,0) node[fill=black, circle, scale=0.5]{} -- (4,-1) node[fill=black, circle, scale=0.5]{};
		\draw[thick] (3,0)  -- (2,-1) node[fill=black, circle, scale=0.5]{};
		\draw[thick] (4,-1) node[fill=black, circle, scale=0.5]{} -- (4,-2.4) node[fill=black, circle, scale=0.5]{};
	\end{tikzpicture}
	\caption{Two distinct tree diagrams with branching $(1,2)$.}
	\label{Fig:two-branching-trees}
\end{figure}
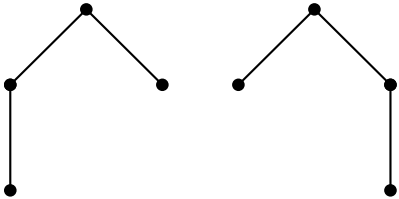
For $p=q$, we simply say \emph{tree diagrams with branching $p$}. Of particular importance to us are the \emph{$p$-ary tree diagrams}, which are those tree diagrams with branching $p$ where every internal node has exactly $p$ children. 

The \textit{Fuss-Catalan numbers} $R_{p,q}(k)$ are the numbers defined (cf.\ \cite[Theorem 2.5]{BD}) by the formula
\[
R_{p,q}(k)=\frac{q}{kp+q} \binom{kp+q}{k}=\frac{q}{k} \binom {kp+q-1}{k-1}.
\]
Fuss-Catalan numbers are important for us because they count tree diagrams of branching $(p,q)$; we believe this fact to be folklore, but include a proof as we have been unable to find a reference.

\begin{proposition}
	The number of tree diagrams with branching $(p,q)$ on $k$ edges is the Fuss-Catalan number $R_{p,q}(k)$.
\end{proposition}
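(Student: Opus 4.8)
The plan is to prove this by a generating-function argument combined with Lagrange inversion; a more combinatorial route through the cycle lemma of Dvoretzky--Motzkin is also available, but the analytic argument is the quickest to set up cleanly.

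First I would record the recursive structure of these tree diagrams. Let $B(x)=\sum_{k\ge 0}b_kx^k$ where $b_k$ is the number of tree diagrams with branching $p$ (i.e.\ branching $(p,p)$) on $k$ edges, and let $F(x)=\sum_{k\ge 0}f_kx^k$ where $f_k$ is the number of tree diagrams with branching $(p,q)$ on $k$ edges. A tree diagram with branching $(p,q)$ is determined by making, for each of the $q$ edge-slots at the root, a choice of either ``empty'' or a tree diagram with branching $p$ hanging from that slot --- and here it is essential that the slots are distinguishable, since we work with subgraphs of the fixed tree $T_{p,q}$ rather than with isomorphism classes. An occupied slot contributes one edge together with the edges of the attached diagram, so $F(x)=(1+xB(x))^q$; applying the same decomposition at the root of a tree diagram with branching $p$ yields the functional equation $B(x)=(1+xB(x))^p$, and since $b_0=1$ this determines $B(x)$ as a formal power series.

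Next I would extract the coefficients. Setting $C(x)=xB(x)$, we get $C(x)=x\,(1+C(x))^p$ with $C(0)=0$, and $F(x)=(1+C(x))^q$. For $k\ge 1$, Lagrange inversion with $\phi(w)=(1+w)^p$ gives
\[
f_k=[x^k]\,(1+C(x))^q=\frac1k\,[w^{k-1}]\Big(q(1+w)^{q-1}(1+w)^{pk}\Big)=\frac{q}{k}\binom{pk+q-1}{k-1}=R_{p,q}(k),
\]
while $f_0=1=R_{p,q}(0)$ directly; this is exactly the asserted formula.

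The only point that genuinely needs care is the justification of the functional equations: one must check that the slot-by-slot decomposition at the root is a bijection onto the stated data, and in particular that the slots are labelled, so that the local contribution is a $p$-th (resp.\ $q$-th) power rather than a symmetric function of the subdiagrams. Once this bijective bookkeeping is in place, the manipulation of formal power series and the application of Lagrange inversion are entirely routine, so I expect that verification to be the main --- though modest --- obstacle.
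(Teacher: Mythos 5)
Your proof is correct, but it takes a different route from the paper. Both arguments start from the same root decomposition (each of the $q$ labelled slots at the root is either empty or carries a branching-$p$ subdiagram, the labelling of slots being legitimate precisely because diagrams are subgraphs of the fixed tree $T_{p,q}$), but they diverge after that point. The paper converts this decomposition into the convolution identity $D_{p,q}(k)=\sum_{i_1+\cdots+i_q=k}{}_pc_{i_1}\cdots{}_pc_{i_q}$, identifies the factors with $p$-Catalan numbers via an explicit bijection between branching-$p$ diagrams on $i$ nodes (plus the empty diagram) and $p$-ary tree diagrams with $i$ internal nodes, and then quotes the known recursion for $R_{p,q}(k)$ from the cited literature to conclude; it thus never needs a closed-form coefficient extraction. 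You instead encode the same decomposition as the functional equations $B(x)=(1+xB(x))^p$ and $F(x)=(1+xB(x))^q$ and apply Lagrange inversion with $\phi(w)=(1+w)^p$, $H(w)=(1+w)^q$, which yields $f_k=\frac{q}{k}\binom{pk+q-1}{k-1}=R_{p,q}(k)$ directly (and $f_0=1$ trivially); the computation is right, and it matches the paper's definition of $R_{p,q}(k)$. The trade-off: your argument is self-contained modulo Lagrange inversion and gives the formula rather than matching a recursion, whereas the paper's argument is purely elementary combinatorics but leans on two external facts (the recursion for the Fuss--Catalan numbers and the enumeration of $p$-ary tree diagrams by $p$-Catalan numbers). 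One cosmetic remark: the clause ``since $b_0=1$ this determines $B(x)$'' is unnecessary, as the equation $B=(1+xB)^p$ already determines $B$ coefficient by coefficient and forces $b_0=1$.
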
	

\begin{proof}
	Let $D_{p,q}(k)$ denote the number of tree diagrams with branching $(p,q)$ on $k$ edges. We show that $D_{p,q}(k)$ satisfies the same recursion as $R_{p,q}(k)$. By \cite[Lemma 2.4]{BD}, $R_{p,q}(k)$ satisfies the equation
\[	
R_{p,q}(k)=\sum_{\substack{i_1+\cdots+i_q=k \\ i_1, \dots, i_q \geq 0}} {}_pc_{i_1}\cdots {}_pc_{i_q},
	\]
where $_pc_i$ denotes the $i$th $p$-Catalan number, and where $p\geq 1$ and $i \geq 0$.
	
The $p$-Catalan numbers, also called the generalized Catalan numbers, have been widely studied. In particular (see e.g. \cite[Theorem~0.2]{HP}), the number $_pc_i$ is equal to the number of $p$-ary tree diagrams with $i$ internal nodes. We claim  that these are in bijection with the set of tree diagrams with branching $p$ on $i$ nodes, together with the empty set.
	Indeed, if $T$ is any tree diagram with branching $p$ on $i$ nodes, it can be turned into a $p$-ary tree with $i$ internal nodes by completing all $i$ nodes to have $p$ children, and conversely, deleting all leaves from a $p$-ary tree diagram with $i \geq 1$ internal nodes gives a tree diagrams with branching $p$ on $i$ nodes; see Figure~\ref{Fig:bijection-tree-diagram-ary}. The single $p$-ary tree diagram with $i=0$ internal nodes is the tree diagram consisting of a single vertex, and this corresponds to the empty set under this bijection.
	\begin{figure}[H]
		\begin{tikzpicture}
			%first tree
			\draw[thick] (-1,0) node[fill=black, circle, scale=0.5]{} -- (0,-1) node[fill=black, circle, scale=0.5]{};
			\draw[thick] (-1,0)  -- (-2,-1) node[fill=black, circle, scale=0.5]{};
			\draw[thick] (-2,-1) -- (-1.25,-1.75) node[fill=black, circle, scale=0.5]{};
			%copy of first tree
			\draw[thick] (4.5,0) node[fill=black, circle, scale=0.5]{} -- (5.5,-1) node[fill=black, circle, scale=0.5]{};
			\draw[thick] (4.5,0)  -- (3.5,-1) node[fill=black, circle, scale=0.5]{};
			\draw[thick] (3.5,-1) -- (4.25,-1.75) node[fill=black, circle, scale=0.5]{};
			%extra dashed edges
			\draw[thick, dashed] (3.5,-1) -- (2.75,-1.75) node[draw=black, solid, circle, scale=0.5]{};
			\draw[thick, dashed] (4.25,-1.75) -- (4.25+0.5,-1.75-0.5) node[draw=black, solid, circle, scale=0.5]{};
			\draw[thick, dashed] (4.25,-1.75) -- (4.25-0.5,-1.75-0.5) node[draw=black, solid, circle, scale=0.5]{};
			\draw[thick, dashed] (5.5,-1) -- (4.75,-1.75) node[draw=black, solid, circle, scale=0.5]{};
			\draw[thick, dashed] (5.5,-1) -- (6.35,-1.75) node[draw=black, solid, circle, scale=0.5]{};
			%bijection arrows
			\draw[->] (0.75,-0.5) to[bend left] (2.25,-0.5);
			\draw[->] (2.25,-1.5) to[bend left] (0.75,-1.5);
		\end{tikzpicture}
		\caption{The bijection taking a tree diagram with branching 2 on 4 nodes to a 2-ary tree diagram with 4 internal nodes}
		\label{Fig:bijection-tree-diagram-ary}
	\end{figure}
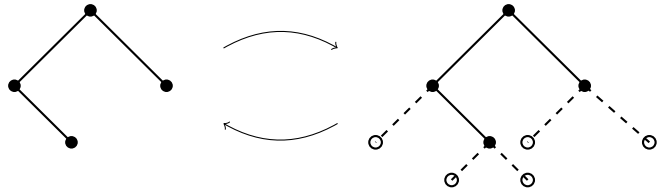
	
	Let us count the number $D_{p,q}(k)$ of tree diagrams with branching $(p,q)$ on $k$ edges. Let $T \subseteq T_{p,q}$ be such a tree diagram, and for all $1\leq j \leq q$, consider the subtree $T^j_{p,q}$ of $T_{p,q}$ induced on all descendants of the $j$th vertex of level one. Define $T_j=T^j_{p,q} \cap T$ -- this is either a tree diagram with branching $p$, or empty. Notice that $T$ is uniquely determined by the tuple $(T_1, \ldots, T_q)$. Every node of $T$ but the root is contained in exactly one of the subtrees $T_j$, which gives
	$V(T)=V(T_1)+\ldots+V(T_q)+1$, hence $k=V(T)-1=V(T_1)+\ldots+V(T_q)$. Thus $D_{p,q}(k)$ is equal to the number of $q$-tuples $(T_1, \ldots, T_q)$ where each $T_j$ is either a tree diagram with branching $p$, or empty, and where $V(T_1)+\ldots+V(T_q)=k$. The number of such tuples is exactly $\displaystyle\sum_{i_1+\cdots+i_q=k} {}_pc_{i_1}\cdots {}_pc_{i_q}$ as needed.
\end{proof}

We are now ready to use these combinatorial results to count Munn trees, and by extension also enumerate the spheres of a given radius in $\FIM(X)$. 

\subsection{Counting Munn trees}\label{Subsec:Counting-munn-trees}

Let $(T,g) \in \FIM(X)$ be a Munn tree. The length of $(T,g)$ can be expressed directly in terms of trunk edges and branch edges, as follows. 

\begin{lemma}\label{Lem:Munn-tree-length}
The length of a Munn tree $(T,g)$ with $t$ trunk edges and $k$ branch edges is equal to $t+2k$. 
\end{lemma}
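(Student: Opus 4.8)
The plan is to argue that a Munn tree $(T,g)$ is exactly the Munn tree of a word $w$ of length $t + 2k$, where $t$ is the number of trunk edges and $k$ the number of branch edges, and that no shorter word can produce it. First I would recall from \S\ref{Subsec:munn-tree-intro} that the Munn tree of a word $w \in (X \cup X^{-1})^\ast$ is $(\langle w \rangle, w_{\FG(X)})$, where $\langle w \rangle$ is the subgraph of $\Gamma_X$ spanned by the path labelled by $w$ starting at $1$. So producing a given Munn tree $(T,g)$ with a word $w$ is the same as finding a walk in $\Gamma_X$ from $1$ that traverses (the edges underlying) all of $T$ and ends at $g$; the length of $w$ is then the number of edges of this walk counted with multiplicity. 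Thus the length of $(T,g)$ in $\FIM(X)$ is the minimal length of such a closed-up walk.

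The key geometric observation is that $T$ is a tree: every branch edge lies off the (unique) geodesic trunk from $1$ to $g$, so any walk from $1$ to $g$ through it must enter the subtree hanging off that edge and come back, traversing that edge (and indeed every branch edge) at least twice. The trunk edges, on the other hand, can in principle be traversed just once, since they already form a path from $1$ to $g$. This gives the lower bound $|(T,g)| \geq t + 2k$. For the matching upper bound, I would build an explicit walk of length exactly $t + 2k$: perform a depth-first traversal of $T$ rooted at $1$ that defers the trunk direction, i.e.\ at each vertex on the trunk, first recursively explore all the branch subtrees hanging off it (each such excursion being a closed walk that uses every edge of that subtree exactly twice, by the standard Euler-tour / DFS argument on a tree), and only then proceed along the trunk toward $g$. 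Such a walk uses each branch edge exactly twice and each trunk edge exactly once, terminates at $g$, and reads off a word $w$ of length $t + 2k$ with $\langle w \rangle = T$ and $w_{\FG(X)} = g$. Hence $|(T,g)| = t + 2k$.

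The main point requiring care — and the one I would write out most carefully — is the lower bound, specifically the claim that \emph{every} walk from $1$ to $g$ covering $T$ must traverse each branch edge at least twice. The clean way to see this: fix a branch edge $e$, removing which disconnects $T$ (and $\Gamma_X$, since $\Gamma_X$ is a tree) into two components; let $C$ be the one not containing the endpoints $1$ and $g$ of the trunk — note $C$ is nonempty precisely because $e$ is a branch edge, not a trunk edge, so both $1$ and $g$ lie on the same side. Since the walk covers $T \supseteq C$, it must visit $C$, hence cross $e$; and since it starts and ends outside $C$, the number of crossings of $e$ is even and at least $2$. Summing over branch edges and adding the trivial bound of $\geq 1$ crossing for each of the $t$ trunk edges gives length $\geq t + 2k$. (One should remark that distinct edges of $T$ are genuinely distinct edges of $\Gamma_X$ — no edge of $T$ is traversed "for free" as part of covering another — which is immediate since $T$ is a subgraph of the tree $\Gamma_X$.) This is routine but is the crux; everything else is the DFS construction, which is entirely standard.
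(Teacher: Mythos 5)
Your proposal is correct and takes essentially the same route as the paper: the lower bound rests on the fact that any word representing $(T,g)$ must traverse each trunk edge at least once and each branch edge at least twice (you justify the parity by an edge-cut argument in the tree $\Gamma_X$, where the paper invokes an induction on word length), and the upper bound is the same depth-first traversal that defers the trunk direction, using each branch edge exactly twice and each trunk edge once. The only blemish is the phrase ``the walk covers $T \supseteq C$'', which is not literally true since $C$ is a component of $\Gamma_X$ with the edge $e$ removed; what you need, and clearly intend, is that the walk traverses $e \in T$ itself and hence enters $C$, after which your crossing-parity argument goes through unchanged.
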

\begin{proof}
Let $(T,g)$ be a Munn tree with $t$ trunk edges and $k$ branch edges. Then $(T,g)$ is represented by the product $w \in (X \cup X^{-1})^\ast$ if and only if $w$ labels a path from $1$ to $g$ in $T$ such that this path traverses all edges of $T$. By induction on word length, any word labelling a path from $1$ to $g$ must traverse each trunk edge an odd number of times, and each branch edge an even number of times. Hence, any word $w$ representing $(T,g)$ must have length at least $t + 2k$, i.e.\ $|w| \geq t+2k$. On the other hand, it is not difficult to construct a word $w$ of length $t+2k$ representing $(T,g)$: at any vertex,  depth-first traverse the subtree of branch edges attached to that vertex, then continue along the trunk and repeat. The label of the resulting path will then clearly have length $t+2k$, and the Munn tree of this word is exactly $(T,g)$. Hence the length of a shortest word representing $(T,g)$ is exactly $t+2k$, which is what was to be shown.
\end{proof}

Consider the set of all Munn trees with a given trunk. That is, fix $g \in \FG(X)$ and consider the set $\{ (T,g) : T \hbox{ is a subtree in the Cayley graph }\Gamma_X\hbox{ containing }1\hbox{ and }g \}$. Let $w$ be the unique freely reduced word representing $g$, and assume $w$ has length $t$. Let $p_w$ denote the path in $\Gamma_X$ from $1$ to $g$ which is labelled by $w$. This path is then the common trunk of all Munn trees $(T,g)$.

Consider the graph obtained from $\Gamma_X$ by contracting $p_w$ into a single vertex $\alpha$.
Notice that this graph is a tree where every vertex except $\alpha$ has valence $2|X|$. 
Consider it as a rooted tree with root $\alpha$. Notice that Munn trees with trunk $p_w$ are in bijection with the finite, rooted subtrees of this infinite rooted tree, which are in turn counted by Fuss-Catalan numbers. All that remains is to determine the valence of $\alpha$ -- this will depend on the length $t$ of the trunk. A vertex in the contracted graph is adjacent to $\alpha$ if its preimage in $\Gamma_X$ was adjacent to a vertex of $p_w$, so it suffices to count how many such vertices there were. 
In $\Gamma_X$, every vertex has degree $2|X|$, so if $w$ is not the empty word, then any internal node of the trunk will connect to exactly $2|X|-2$ vertices of $\Gamma_X$ which do not lie on the trunk, and the endpoints $1$ and $g$ of the trunk connect to $2|X|-1$ vertices not on the trunk. Since there are $t-1 \geq 0$ internal nodes, the number of such vertices is $2(2|X|-1)+(t-1)(2|X|-2)$. If $w$ is the empty word, that is, if $t-1=-1$, then the trunk consists of a single vertex $1$ and it has $2|X|$ neighbours. Thus, $\alpha$ always has valence $2(2|X|-1)+(t-1)(2|X|-2)$.

It follows that $\alpha$ has exactly $2(2|X|-1)+(t-1)(2|X|-2)$ children, while all non-root vertices in the contracted tree have exactly $2|X|-1$ children. Putting $p=2|X|-1$ and $q=2p+(t-1)(p-1)$, this tree is therefore exactly $T_{p,q}$, and the set of all Munn trees with trunk $p_w$ are counted by $R_{p,q}(k)$ exactly. Thus, we have proved:

\begin{lemma}\label{Lem:Munn-tree-breathing}
Let $X$ be a fixed generating set, let $p = 2|X|-1$, and let $t, k \geq 0$. For any fixed trunk of length $t$, the number of Munn trees with that trunk and with $k$ branch edges is exactly $R_{p,q}(k)$, where $q=2p+(t-1)(p-1)$. 
\end{lemma}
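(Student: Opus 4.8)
The statement is essentially the conclusion of the discussion immediately preceding it, so the proof will package that reasoning as a bijective argument, deferring the actual enumeration to the Proposition. The plan is as follows.

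First I would fix a freely reduced word $w$ of length $t$ representing an element $g \in \FG(X)$, and let $p_w$ be the path from $1$ to $g$ in $\Gamma_X$ that $w$ labels; since $\Gamma_X$ is a tree, $p_w$ is the unique such path, and it is the trunk of every Munn tree whose second coordinate is $g$. Such a Munn tree is precisely a pair $(T,g)$ with $T$ a finite connected subtree of $\Gamma_X$ satisfying $p_w \subseteq T$, and its branch edges are exactly the edges of $T$ off $p_w$. Hence the Munn trees with trunk $p_w$ and $k$ branch edges are in bijection with the finite connected subtrees of $\Gamma_X$ that contain $p_w$ and have exactly $k$ edges off $p_w$.

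Next I would contract $p_w$ to a single vertex $\alpha$, obtaining a graph $\Gamma_X/p_w$. Because $\Gamma_X$ is acyclic and $p_w$ is a simple path, $\Gamma_X/p_w$ is again a tree; rooting it at $\alpha$, the finite connected subtrees of $\Gamma_X$ containing $p_w$ correspond bijectively to the finite rooted subtrees of $\Gamma_X/p_w$ containing $\alpha$ (each such subtree of $\Gamma_X/p_w$ lifts uniquely to a subtree of $\Gamma_X$ containing $p_w$, and conversely), and under this correspondence the branch edges of the Munn tree are precisely the edges of the subtree. It therefore remains to identify the rooted tree $\Gamma_X/p_w$.

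Every vertex of $\Gamma_X$ off $p_w$ is untouched by the contraction and keeps valence $2|X|$, so in $\Gamma_X/p_w$ every vertex other than $\alpha$ has $2|X|-1 = p$ children. For the valence of $\alpha$ I would count the vertices of $\Gamma_X$ off $p_w$ that are adjacent to some vertex of $p_w$: when $t \geq 1$ each of the two endpoints of $p_w$ contributes $2|X|-1$ and each of the $t-1$ interior vertices contributes $2|X|-2$, for a total of $2(2|X|-1)+(t-1)(2|X|-2) = 2p+(t-1)(p-1) = q$; when $t = 0$ the trunk is the single vertex $1$, which has $2|X|$ neighbours, and one checks that $2p+(t-1)(p-1) = p+1 = 2|X|$, so the formula is uniform in $t$. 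Thus $\Gamma_X/p_w$ is exactly the infinite rooted tree $T_{p,q}$, its finite rooted subtrees on $k$ edges are precisely the tree diagrams with branching $(p,q)$ on $k$ edges, and by the Proposition there are $R_{p,q}(k)$ of these. Chaining the bijections gives the lemma. The only points requiring a little care are that contracting the trunk really does yield a tree — this is exactly where acyclicity of $\Gamma_X$ is used — and that the degenerate case $t=0$ agrees with the stated formula; both are routine, so I expect no genuine obstacle beyond this bookkeeping.
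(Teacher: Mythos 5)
Your argument is correct and follows essentially the same route as the paper: contract the trunk $p_w$ to a root $\alpha$, observe that the resulting rooted tree is $T_{p,q}$ with $q=2p+(t-1)(p-1)$ (including the uniform $t=0$ check), and invoke the proposition on tree diagrams to get $R_{p,q}(k)$. The only difference is that you spell out the bijection and the acyclicity of the contracted graph slightly more explicitly than the paper does, which is harmless.
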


Let $M(t,k)$ denote the set of Munn trees with $t$ trunk edges and $k$ branch edges. Taking notation as in Lemma~\ref{Lem:Munn-tree-breathing}, it follows from the same lemma that 
\[
|M(t,k)|=R_{p,q}(k) \cdot|\{w \in (X \cup X^{-1})^* \hbox{ is a freely reduced word of length }t\}|,
\] 
since the second factor in the above expression is the number of possible fixed trunks of length $t$. If $t\geq 1$, then there are $(p+1)p^{t-1}$ possible freely reduced words of length $t$ (we have $p+1$ choices for the first letter, and $p$ for the rest), and  there is one reduced word of length $0$. For any $t \geq 1$, this gives the formula
\begin{align}
|M(t,k)| & = (p+1)p^{t-1}R_{p,2p+(t-1)(p-1)}(k)=\nonumber \\
&=(p+1)p^{t-1}\frac{2p+(t-1)(p-1)}{kp+2p+(t-1)(p-1)} \binom{kp+2p+(t-1)(p-1)}{k},\label{eqn:number-of-trees}
\end{align}
and the particular case of $t=0$ becomes
\begin{equation}
\label{eqn:number-of-idempotents}	
|M(0,k)|=R_{p,p-1}(k) = \frac{p+1}{kp+p+1} \binom{kp+p+1}{k}.
\end{equation}
Thus, we have counted the number of Munn trees $M(t,k)$ of a given number $t$ of trunk edges and $k$ of branch edges.

\subsection{Sphere size}

Using the results of \S\ref{Subsec:Counting-munn-trees}, it is now straightforward to calculate the size $|S(K)|$ of the sphere of radius $K$ in $\FIM(X)$. Any Munn tree in $M(t,k)$ has length $t+2k$ by Lemma~\ref{Lem:Munn-tree-length}. In particular, idempotents (being exactly those Munn trees with trunk length $0$) always have an even length. If $K$ is even, then 
\begin{align}
\begin{split}
\label{eqn:S(K)-K-even}
|S(K)|&=	|M(0,\frac{1}{2}K)|+\sum_{\substack{t+2k = K \\ t \geq 1, \ k \geq 0}}|M(t,k)|=\\
&=\frac{p+1}{\frac{1}{2} Kp+p+1} \binom{\frac{1}{2}Kp+p+1}{\frac{1}{2}K}+\\
& + \sum_{\substack{t+2k = K \\ t \geq 1, \ k \geq 0}}(p+1)p^{t-1}\frac{2p+(t-1)(p-1)}{kp+2p+(t-1)(p-1)} \binom{kp+2p+(t-1)(p-1)}{k},
\end{split}
\end{align}
and when $K$ is odd, we have
\begin{align}
\label{eqn:S(K)-K-odd}
\begin{split}
|S(K)|&=\sum_{\substack{t+2k = K \\ t \geq 1, \ k \geq 0}}|M(t,k)|=\\
&=\sum_{\substack{t+2k= K\\ t \geq 1, \ k \geq 0}}(p+1)p^{t-1}\frac{2p+(t-1)(p-1)}{kp+2p+(t-1)(p-1)} \binom{kp+2p+(t-1)(p-1)}{k}.
\end{split}
\end{align}
By a simple summation, one could also of course deduce corresponding formulae for the size of the ball of a given radius.

\subsection{The monogenic case}\label{Subsec:monogenic}

Before embarking on an in-depth analysis of the asymptotics of \eqref{eqn:S(K)-K-even} and \eqref{eqn:S(K)-K-odd} for $|X|>1$, we consider the much easier monogenic case, i.e.\ when $|X| = 1$. In this case, $p=1$ and \eqref{eqn:number-of-trees} simplifies to give
\[
M(t,k) = 2 \frac{p+1}{kp+p+1} \binom{kp+p+1}{k}= 2 \frac{2}{k+2} \binom{k+2}{k}=2k+2,
\]
for $t \geq 1$ which makes
\[|S(2R+1)|=\sum_{\substack{t+2k= 2R+1 \\ t \geq 1, \ k \geq 0}} 2k+2=\sum_{k=0}^R 2k+2=R^2+3R+2,\]
and, almost identically, we have
\[
|S(2R)|=R+1+\sum_{\substack{t+2k= 2R\\ t \geq 1, \ k \geq 0}} 2k+2=R+1+\sum_{k=0}^{R-1} 2k+2=R^2+2R+1.
\]
Thus, we obtain the following well-known (cf.\ e.g. \cite[Theorem~2.4]{Cutting2001}) result:

\begin{proposition}\label{Prop:FIM1-growth}
The spherical growth rate of the monogenic free inverse monoid $\FIM_1$ is quadratic. Consequently, the ball growth rate of $\FIM_1$ is cubic. 
\end{proposition}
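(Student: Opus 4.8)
The plan is to read everything off the sphere-size formulas \eqref{eqn:number-of-trees}--\eqref{eqn:S(K)-K-odd} already established, specialised to $p=1$. When $p=1$ the parameter $q=2p+(t-1)(p-1)$ in Lemma~\ref{Lem:Munn-tree-breathing} equals $2$ for every $t$, so every Fuss--Catalan factor degenerates to $R_{1,2}(k)=\tfrac{2}{k+2}\binom{k+2}{k}=k+1$; hence $|M(t,k)|=(p+1)p^{t-1}(k+1)=2(k+1)$ for all $t\geq 1$, while \eqref{eqn:number-of-idempotents} gives $|M(0,k)|=k+1$. Feeding these into the even and odd cases \eqref{eqn:S(K)-K-even} and \eqref{eqn:S(K)-K-odd} and summing the resulting arithmetic progressions produces the closed forms $|S(2R)|=(R+1)+\sum_{k=0}^{R-1}2(k+1)=(R+1)^2$ and $|S(2R+1)|=\sum_{k=0}^{R}2(k+1)=(R+1)(R+2)$ -- exactly the two identities displayed immediately before the proposition. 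These two computations are the only real content.

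From the closed forms the spherical growth function $\gamma$ is squeezed between quadratic polynomials in the radius $K$ (explicitly $\tfrac14 K^2\leq \gamma(K)\leq (K+1)^2$ for $K\geq 1$), so $\FIM_1$ has polynomial spherical growth of degree exactly $2$: the upper estimate gives degree at most $2$, and the fact that $\gamma(2R)=(R+1)^2$ eventually exceeds $CK$ for every constant $C$ rules out degree $1$. This is the claimed quadratic growth.

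For the ball growth, write $\beta(K)=\sum_{j=0}^{K}\gamma(j)$ and split the sum according to the parity of $j$. Using $\sum_{m=1}^{n}m^2=\tfrac{n(n+1)(2n+1)}{6}$ and $\sum_{m=1}^{n}m(m+1)=\tfrac{n(n+1)(n+2)}{3}$, each half of the sum is a cubic polynomial, so $\beta(K)$ itself is given by an explicit cubic polynomial in $K$ (one per parity of $K$; for instance $\beta(2R)=\tfrac{(R+1)(R+2)(4R+3)}{6}$). In particular $\beta(K)$ is $O(K^3)$, while $\beta(K)\geq\sum_{j=R}^{2R}\gamma(j)$ is of order $R^3$ and so admits no quadratic bound; hence the ball growth of $\FIM_1$ is cubic, as asserted.

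There is essentially no obstacle: once the $p=1$ specialisation is carried out, the argument is pure bookkeeping with arithmetic and low-degree sums. The only points needing a little care are keeping the two parity cases straight (and not dropping the idempotent term $|M(0,k)|$ in the even case), and, to match the paper's definition of polynomial growth of degree $d$, verifying both a degree-$d$ upper bound and the failure of any degree-$(d-1)$ bound, for $d=2$ on spheres and $d=3$ on balls.
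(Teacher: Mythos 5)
Your computation is exactly the paper's: specialising \eqref{eqn:number-of-trees} and \eqref{eqn:number-of-idempotents} to $p=1$ gives $|M(t,k)|=2k+2$ for $t\geq 1$ and $|M(0,k)|=k+1$, and summing yields $|S(2R)|=(R+1)^2$ and $|S(2R+1)|=(R+1)(R+2)$, from which quadratic sphere growth and cubic ball growth follow just as in \S\ref{Subsec:monogenic}. Your extra bookkeeping (the explicit lower bounds pinning the degrees and the closed form for $\beta(2R)$) is correct and only makes explicit what the paper leaves as an immediate consequence.
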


We will now expand in two directions; first, in \S\ref{Sec:Idempotent-growth} we use \eqref{eqn:number-of-idempotents} to find a formula for the growth rate of the number of idempotents in $\FIM(X)$ in Theorem~\ref{Thm:growth-of-idempotents}. Next, in \S\ref{Sec:General-exponential-growth-rate}, we will analyse the formulae \eqref{eqn:S(K)-K-even} and \eqref{eqn:S(K)-K-odd} in depth to find the exponential growth rate of $\FIM(X)$ when $|X|>1$ in Theorem~\ref{expg}.

\section{Idempotent growth}\label{Sec:Idempotent-growth}

\noindent We now calculate the growth of the set of idempotents in $\FIM(X)$ when $|X|>1$. As in \S\ref{Sec:sphere-counting}, we set $p=2|X|-1$, and let $E(X)$ denote the set of idempotents of $\FIM(X)$. 
Recalling that these are represented exactly by the Munn trees with trunk of length $0$, by \eqref{eqn:number-of-idempotents} the number of idempotents in the sphere of radius $K$ is:
\[|S(K) \cap E(X)| = \begin{cases}
 0 \ \ \ \ \ \ \ & \textrm{ if $K$ is odd} \\
 |M(0, k)| = \frac{p+1}{kp+p+1} \binom{kp+p+1}{k} & \textrm{ if $K = 2k$}
 \end{cases}.\]
In fact (as we shall see in the computation that follows) the $K$th roots of the even terms converge, so the exponential growth rate can be computed as a simple limit:
\begin{align}
\lim_{k \to \infty} |M(0,k)|^{\frac{1}{2k}}
&= \underbrace{\left( \lim_{k \to \infty} \left( \frac{p+1}{kp+p+1} \right)^{\frac{1}{2k}} \right)}_{= 1}
\left( \lim_{k \to \infty} {\binom{kp+p+1}{k}}^{\frac{1}{2k}} \right) \nonumber \\
&= \lim_{k \to \infty} {\binom{kp+p+1}{k}}^{\frac{1}{2k}} = \lim_{k \to \infty} \left( \frac{(kp+p+1)!}{      k! \ (k(p-1)+p+1)!  } \right)^{\frac{1}{2k}}.\label{Eq:idem-limit}
\end{align}
Since $|X|>1$ and thus also $p > 1$, we see that both $kp+p+1$ and $k(p-1)+p+1$ tend to infinity with $k$. Hence we may apply Stirling's approximation to \eqref{Eq:idem-limit} to give:
\begin{align*}
&\lim_{k \to \infty} \left( \frac{\sqrt{2\pi (kp+p+1)} \left(\frac{kp+p+1}{e}\right)^{kp+p+1}}{  \sqrt{2\pi k} \left(\frac{k}{e}\right)^{k} \ \sqrt{2\pi (k(p-1)+p+1)}
\left(\frac{k(p-1)+p+1}{e}\right)^{k(p-1)+p+1}   }     \right)^{\frac{1}{2k}} \\
&= \underbrace{\lim_{k \to \infty} \left( \frac{\sqrt{2\pi (kp+p+1)}}{\sqrt{2\pi k} \ \sqrt{2\pi (k(p-1)+p+1)} }     \right)^{\frac{1}{2k}}}_{= 1}
\lim_{k \to \infty} \left( \frac{ \left(\frac{kp+p+1}{e}\right)^{kp+p+1}}{ \left(\frac{k}{e}\right)^{k} \
\left(\frac{k(p-1)+p+1}{e}\right)^{k(p-1)+p+1}   }     \right)^{\frac{1}{2k}}
\end{align*}
We may now cancel the $e$s in the right factor, giving
\begin{align*}
&= \lim_{k \to \infty} \left( \frac{(kp+p+1)^{kp+p+1}}{k^{k} \
(k(p-1)+p+1)^{k(p-1)+p+1}   }     \right)^{\frac{1}{2k}} \\
&=  \left( \lim_{k \to \infty} \frac{(kp+p+1)^{\frac{kp+p+1}{k}}}{k^{\frac{k}{k}} \ (k(p-1)+p+1)^{\frac{k(p-1)+p+1}{k}}} \right)^{\frac{1}{2}} \\
&= \left( \lim_{k \to \infty} \frac{(kp+p+1)^{p}}{k \ (k(p-1)+p+1)^{p-1}} \right)^{\frac{1}{2}}
\underbrace{\left( \lim_{k \to \infty} \frac{(kp+p+1)^{\frac{p+1}{k}}}{(k(p-1)+p+1)^{\frac{p+1}{k}}} \right)^{\frac{1}{2}}}_{=1} \\
&= \left(\lim_{k \to \infty} \frac{(kp+p+1)^p}{k \ (k(p-1)+p+1)^{p-1}}\right)^{\frac{1}{2}} = \sqrt{\frac{p^p}{(p-1)^{p-1}}}
\end{align*}
where the last equality follows by observing that the numerator and denominator are degree $p$ polynomials in $k$, and thus the limit is the quotient of the leading coefficients. Since $\lim_{p \to \infty} \left(\frac{p}{p-1}\right)^{p-1}= e$, it follows that as the rank $|X| \to \infty$, the growth rate of the idempotents tends to $\sqrt{ep}$. We summarize the results below in terms of the rank $r=|X|$, for which $p=2r-1$. 

\begin{theorem}\label{Thm:growth-of-idempotents}
The exponential growth rate of the number of idempotents in the free inverse monoid of rank $r$ is
\[\left(\frac{2r-1}{2r-2}\right)^{r-1} \sqrt{2r-1}.\]
As $r$ tends to infinity this converges to $\sqrt{e(2r-1)}$.
\end{theorem}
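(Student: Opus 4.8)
The plan is to read the growth rate of the idempotents straight off the closed formula \eqref{eqn:number-of-idempotents} for $|M(0,k)|$ by means of Stirling's approximation, and then to let the rank $r$ tend to infinity.

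First I would note that, since $|S(K)\cap E(X)| = 0$ whenever $K$ is odd, the exponential growth rate of the idempotents, in the sense of \eqref{Eq:limsup-definition-growth} applied to the function $K\mapsto|S(K)\cap E(X)|$, equals
\[
\limsup_{K\to\infty}|S(K)\cap E(X)|^{1/K} \;=\; \limsup_{k\to\infty}|M(0,k)|^{1/(2k)}.
\]
So everything reduces to the sequence $|M(0,k)|^{1/(2k)}$, which I have to show converges and whose limit I have to identify. Writing $p = 2|X|-1 = 2r-1$ and using \eqref{eqn:number-of-idempotents}, we have $|M(0,k)| = \tfrac{p+1}{kp+p+1}\binom{kp+p+1}{k}$; the rational prefactor is squeezed between $\tfrac{1}{k+1}$ and $1$, so its $(2k)$-th root tends to $1$, and the problem becomes the evaluation of $\lim_{k\to\infty}\binom{kp+p+1}{k}^{1/(2k)}$.

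Next I would expand $\binom{kp+p+1}{k} = \tfrac{(kp+p+1)!}{k!\,(k(p-1)+p+1)!}$ and apply $n!\sim\sqrt{2\pi n}\,(n/e)^{n}$ to each of the three factorials; this is legitimate because $p>1$ forces all three arguments to tend to infinity. The $\sqrt{2\pi n}$ factors are polynomial in $k$ and so are invisible to the $(2k)$-th root, and the powers of $e$ cancel exactly, since $kp+p+1 = k + (k(p-1)+p+1)$. What remains is
\[
\lim_{k\to\infty}\left(\frac{(kp+p+1)^{\,kp+p+1}}{k^{\,k}\,(k(p-1)+p+1)^{\,k(p-1)+p+1}}\right)^{1/(2k)}.
\]
Moving the $1/k$ inside the exponents turns them into $\tfrac{kp+p+1}{k}$, $1$, and $\tfrac{k(p-1)+p+1}{k}$, and since any polynomial in $k$ raised to the power $1/k$ tends to $1$, the fractional parts $\tfrac{p+1}{k}$ of the first and third of these exponents are harmless; in the limit they may thus be replaced by $p$, $1$, $p-1$. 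Hence the growth rate equals $\bigl(\lim_{k\to\infty}\tfrac{(kp+p+1)^{p}}{k\,(k(p-1)+p+1)^{p-1}}\bigr)^{1/2}$, and as numerator and denominator are polynomials of degree $p$ in $k$ with leading coefficients $p^{p}$ and $(p-1)^{p-1}$ respectively, this limit is $\sqrt{p^{p}/(p-1)^{p-1}}$. Splitting off one factor of $p = 2r-1$ and using $(p-1)/2 = r-1$ rewrites this as $\bigl(\tfrac{2r-1}{2r-2}\bigr)^{r-1}\sqrt{2r-1}$, the asserted value.

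Finally, for the behaviour as $r\to\infty$ I would write $\bigl(\tfrac{2r-1}{2r-2}\bigr)^{r-1} = \Bigl(\bigl(1+\tfrac{1}{2r-2}\bigr)^{2r-2}\Bigr)^{1/2}$, which tends to $\sqrt{e}$ by the standard limit, so the growth rate is asymptotic to $\sqrt{e}\cdot\sqrt{2r-1} = \sqrt{e(2r-1)}$ (the factor $\sqrt{2r-1}$ is itself unbounded, so ``converges to'' is meant here as asymptotic equivalence of the two expressions). I do not anticipate any genuine obstacle here: the only points requiring care are the bookkeeping of the lower-order $\tfrac{p+1}{k}$ terms in the Stirling exponents and, relatedly, confirming that the $(2k)$-th roots really converge rather than merely having the right $\limsup$ — both of which fall out of carrying the computation through carefully.
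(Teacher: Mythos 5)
Your proposal is correct and follows essentially the same route as the paper: reducing to $\lim_{k\to\infty}|M(0,k)|^{1/(2k)}$ via the closed formula \eqref{eqn:number-of-idempotents}, applying Stirling's approximation to the three factorials, extracting the limit $\sqrt{p^p/(p-1)^{p-1}}$ from the leading coefficients, and rewriting with $p=2r-1$. The asymptotic statement via $\bigl(1+\tfrac{1}{2r-2}\bigr)^{2r-2}\to e$ also matches the paper's argument.
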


For example, the exponential growth rate of idempotents in $\FIM_2$ is $\frac{3}{2} \sqrt{3}$, or approximately $2.598$, while $\sqrt{e(2r-1)} \approx 2.856$. We remark that, in each rank, the exponential growth rate of the idempotents is strictly below $2r-1$, which is the obvious lower bound on the exponential growth rate of the whole monoid. Thus, the proportion of elements of a given length $K$ which are idempotent tends to zero exponentially fast in $K$. In particular, the growth of the idempotents is not a significant contributor to the growth $\FIM(X)$.

%We compute a few sample terms for illustration:
%\begin{itemize}
%\item For $r=2$ the growth rate of the idempotents is $\frac{3}{2} \sqrt{3} \approx 2.598$, while $\sqrt{e(2r-1)} \approx 2.856$.
%\item For $r=3$ the growth rate is $\left(\frac{5}{4} \right)^2 \sqrt{5} \approx 3.494$, while $\sqrt{e(2r-1)} \approx 3.687$.
%\item For $r=4$ the growth rate is $\left(\frac{7}{6} \right)^3 \sqrt{7} \approx 4.201$, while $\sqrt{e(2r-1)} \approx 4.362$.
%\item For $r=5$ the growth rate is $\left(\frac{9}{8} \right)^4 \sqrt{9} \approx 4.805$, while $\sqrt{e(2r-1)} \approx 4.946$.
%\item For $r=100$ the growth rate is $\left(\frac{199}{198} \right)^{99} \sqrt{199} \approx 23.229$, while $\sqrt{e(2r-1)} \approx 23.258$.
%\item For $r=1,000,000$ the growth rate is $\left(\frac{1,999,999}{1,999,998} \right)^{999,999} \sqrt{1,999,999} \approx 2331.643$ and $\sqrt{e(2r-1)}$ agrees to three decimal places.
%\end{itemize}

%We now turn to studying the exponential growth rate of $\FIM_n$ when $n>1$, having already studied the monogenic case. 

\section{The growth of free inverse monoids of higher rank}\label{Sec:General-exponential-growth-rate}

\noindent In this section we describe the exponential growth rate of $\FIM_r$ when $r > 1$ (the case $r=1$ already having been handled in \S\ref{Subsec:monogenic}). This will turn out to be the largest real root of a certain polynomial. We begin with proving some general lemmas regarding roots of certain polynomials.

\subsection{Roots of polynomials}

We shall need the following elementary fact. 
\begin{lemma}\label{sillypoly}
Let $p \geq 2, p\in \mathbb{N}$ and $\alpha, \beta \in \mathbb{R}$ with $\alpha, \beta > 0$. Then, not counting multiplicity, the polynomial $\alpha (t-1)^{p-1} - \beta t^{p-2}\in \mathbb{R}[t]$ has exactly one root in the interval $(1, \infty)$.
\end{lemma}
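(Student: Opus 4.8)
The plan is to analyze the function $f(t) = \alpha(t-1)^{p-1} - \beta t^{p-2}$ on the interval $(1,\infty)$ and show it changes sign exactly once. First I would record the boundary behaviour: at $t=1$ we have $f(1) = -\beta < 0$ (using $p-1 \geq 1$ so the first term vanishes, and $p \geq 2$ so the second term is $-\beta \cdot 1 = -\beta$ when $p=2$, or $-\beta$ when $p>2$ as well since $1^{p-2}=1$), and as $t \to \infty$, since $\alpha(t-1)^{p-1}$ has degree $p-1$ which strictly exceeds the degree $p-2$ of $\beta t^{p-2}$, we get $f(t) \to +\infty$. By the intermediate value theorem there is at least one root in $(1,\infty)$.

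For uniqueness, the cleanest route is to rewrite the equation $f(t) = 0$ (for $t > 1$, where both $t-1 > 0$ and $t > 0$) as $\frac{(t-1)^{p-1}}{t^{p-2}} = \frac{\beta}{\alpha}$, i.e.\ to study $g(t) = \frac{(t-1)^{p-1}}{t^{p-2}} = t \left(\frac{t-1}{t}\right)^{p-1} = t\left(1 - \tfrac1t\right)^{p-1}$ and show $g$ is strictly increasing on $(1,\infty)$; then it hits the value $\beta/\alpha > 0$ at most once. I would compute $g'(t)$ and check positivity. Writing $g(t) = t^{-(p-2)}(t-1)^{p-1}$, logarithmic differentiation gives $\frac{g'(t)}{g(t)} = -\frac{p-2}{t} + \frac{p-1}{t-1}$, which, put over a common denominator, equals $\frac{-(p-2)(t-1) + (p-1)t}{t(t-1)} = \frac{t + (p-2)}{t(t-1)}$. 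For $t > 1$ and $p \geq 2$, the numerator $t + p - 2 > 0$ and the denominator $t(t-1) > 0$, so $g'(t) > 0$ (noting $g(t) > 0$ on this interval). Hence $g$ is strictly monotone increasing on $(1,\infty)$, so the equation $g(t) = \beta/\alpha$ has at most one solution there, giving exactly one root of $f$ in $(1,\infty)$ not counting multiplicity.

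An alternative, purely polynomial-arithmetic approach avoiding division would be to bound the number of sign changes of $f'$, but this is messier; the reformulation via $g$ is essentially the same computation done more transparently, and I would present that. The only mild subtlety to handle carefully is the degenerate-looking case $p = 2$, where $f(t) = \alpha(t-1) - \beta$ is linear with the single root $t = 1 + \beta/\alpha > 1$, consistent with (and a trivial special case of) the general argument, so I would simply remark that the formula $\frac{g'}{g} = \frac{t+p-2}{t(t-1)}$ remains valid and positive when $p=2$.

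I do not expect any serious obstacle here — the statement is genuinely elementary. The one point requiring a little care is making sure the monotonicity computation of $g$ is valid across the whole range $p \geq 2$ and that the endpoint value $f(1) < 0$ together with $f(\infty) = +\infty$ is stated with the correct inequality on degrees; beyond that the proof is a short calculus argument.
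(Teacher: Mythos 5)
Your proof is correct, but it takes a genuinely different route from the paper. You argue directly: existence follows from $f(1)=-\beta<0$ and the degree comparison $p-1>p-2$, and uniqueness follows by rewriting $f(t)=0$ on $(1,\infty)$ as $g(t)=\beta/\alpha$ with $g(t)=(t-1)^{p-1}/t^{p-2}$ and checking via logarithmic differentiation that
\[
\frac{g'(t)}{g(t)}=\frac{p-1}{t-1}-\frac{p-2}{t}=\frac{t+p-2}{t(t-1)}>0
\]
for $t>1$, so $g$ is strictly increasing and meets each positive level at most once; your computation is right, including the $p=2$ case. The paper instead argues by contradiction with $p$ minimal: the polynomial is negative at $t=1$ and positive for large $t$, so a putative counterexample forces (via Rolle's theorem between distinct roots, or at a root that is also a turning point) the derivative $\alpha(p-1)(t-1)^{p-2}-\beta(p-2)t^{p-3}$ to have at least two distinct roots in $(1,\infty)$; since the derivative has the same form with smaller $p$, this contradicts minimality. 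The paper's induction exploits the closure of the family under differentiation and needs no quotient or monotonicity computation, while your argument is more explicit and yields slightly more information (the root is simple, since $f(t)=t^{p-2}\bigl(\alpha g(t)-\beta\bigr)$ and $g'>0$ there), though that extra strength is not needed for the lemma as stated.
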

\begin{proof}
Suppose for contradiction the claim is false and take a counterexample with $p\geq 2$ minimal -- clearly $p >2$, as otherwise the polynomial could not have more than one root. The given polynomial is clearly negative at $t=1$ and positive for sufficiently large $t$, so it must have an odd number of roots (counting multiplicity) on $(1, \infty)$. Because it
is a counterexample to the claim, this means it has at least $3$ distinct roots, or else two distinct roots such that at least one root is also a turning point.  Now between every pair of distinct roots of the polynomial there must be a root of the derivative, so the derivative has at least two distinct roots on $(1, \infty)$. However, the
derivative $\alpha (p-1) (t-1)^{p-2} - \beta (p-2) t^{p-3}$ is another function of the given form but of lower degree, so this contradicts the minimality of $p$.
\end{proof}

Now we use Lemma~\ref{sillypoly} to determine the behaviour of the maximal roots of a certain family of polynomials. These roots will be the exponential growth rates of the free inverse monoids, as we will see in Theorem~\ref{expg}.

\begin{lemma}\label{largerank}
Let $p\in\mathbb{N}$ with $p \geq 2$. Let $y=y(p)$ be the largest real solution to $p^p y^{p-2} - (py-1)^{p-1}=0$. Then $y\in (p,p+1)$ and, as $p \to \infty$, we have
\begin{equation}
y=p+1-\frac{3}{2p}+o\left( \frac{1}{p} \right). \label{Eq:asymptotic-y}
\end{equation}
\end{lemma}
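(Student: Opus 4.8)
The plan is to substitute $y = p + c(p)$ for an unknown correction term $c(p)$ and extract the leading behaviour of $c(p)$ by comparing the two sides of the defining equation $p^p y^{p-2} = (py-1)^{p-1}$. First I would establish the localisation $y \in (p, p+1)$: by Lemma~\ref{sillypoly} with $t = py$ (so $t$ ranges over $(1,\infty)$ as $y$ ranges over the relevant interval, after rewriting the equation in the form $\alpha(t-1)^{p-1} - \beta t^{p-2} = 0$ with $\alpha = 1$ and $\beta = p^p/p^{p-2} = p^2$... let me be careful: writing $t = py$ gives $(t-1)^{p-1} = p^p (t/p)^{p-2} = p^2 t^{p-2}$, so indeed $\alpha = 1$, $\beta = p^2$), there is exactly one root of the polynomial in $(1,\infty)$, hence exactly one value of $y$ in $(1/p, \infty)$, and I then just need to check the sign of $p^p y^{p-2} - (py-1)^{p-1}$ at $y = p$ and $y = p+1$ to see the root lies strictly between them. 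At $y = p$ the expression is $p^{2p-2} - (p^2-1)^{p-1} = (p^2)^{p-1} - (p^2-1)^{p-1} > 0$, and at $y = p+1$ it is $p^p(p+1)^{p-2} - (p^2+p-1)^{p-1}$; since $p^2 + p - 1 > p(p+1) \cdot$ hmm, actually one compares $p^p(p+1)^{p-2}$ with $(p^2+p-1)^{p-1}$ — taking the ratio and using $p^2+p-1 = p(p+1) - 1$ one shows for $p \geq 2$ this is negative, completing the interval claim. (A short sign computation; the inequality $(p(p+1)-1)^{p-1} > p^p(p+1)^{p-2}$ is equivalent to $\left(1 - \tfrac{1}{p(p+1)}\right)^{p-1} > \tfrac{p}{p+1} = 1 - \tfrac1{p+1}$, which holds since $\tfrac{p-1}{p(p+1)} < \tfrac{1}{p+1}$ and $(1-x)^{p-1} > 1 - (p-1)x$... one must be slightly more careful but it is elementary.)

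Next, for the asymptotics, I would take logarithms of the defining equation: $p\ln p + (p-2)\ln y = (p-1)\ln(py - 1)$. Writing $y = p + c$ with $c = c(p) = O(1)$ (which we may assume pending confirmation, since $y \in (p,p+1)$), substitute and expand each logarithm. We have $\ln y = \ln p + \ln(1 + c/p)$ and $\ln(py-1) = \ln(p^2 + pc - 1) = 2\ln p + \ln\!\left(1 + \tfrac{c}{p} - \tfrac{1}{p^2}\right)$. Plugging in, the $\ln p$ terms give $p\ln p + (p-2)\ln p - (p-1)\cdot 2\ln p = p\ln p - p\ln p = 0$, so they cancel exactly — a good sanity check that $y \asymp p$ is the right scale. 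What remains is
\[
(p-2)\ln\!\left(1 + \frac{c}{p}\right) = (p-1)\ln\!\left(1 + \frac{c}{p} - \frac{1}{p^2}\right).
\]
Now expand both sides using $\ln(1+u) = u - u^2/2 + O(u^3)$ with $u = O(1/p)$, and track terms up to order $1/p$. The left side is $(p-2)\left(\tfrac{c}{p} - \tfrac{c^2}{2p^2} + O(p^{-3})\right) = c - \tfrac{2c}{p} - \tfrac{c^2}{2p} + O(p^{-2})$, and similarly the right side is $(p-1)\left(\tfrac{c}{p} - \tfrac{1}{p^2} - \tfrac{c^2}{2p^2} + O(p^{-3})\right) = c - \tfrac{1}{p} - \tfrac{c}{p} - \tfrac{c^2}{2p} + O(p^{-2})$. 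Equating the $O(1)$ terms gives $c = c$ (no information, as expected), and equating the $O(1/p)$ terms gives $-2c = -1 - c$, i.e.\ $c = 1$ at leading order, so $y = p + 1 + o(1)$. To get the next term I would refine: set $c = 1 + d/p + o(1/p)$ and repeat the expansion keeping terms to order $1/p^2$, which should pin down $d = -3/2$ and yield $y = p + 1 - \tfrac{3}{2p} + o(1/p)$ as claimed. (One has to be careful, since now $c$ itself depends on $p$, to re-expand $\ln(1 + c/p)$ with $c/p = 1/p + d/p^2 + \cdots$; the correction $d/p^2$ enters the $1/p^2$ coefficient of the logarithm linearly, so the computation stays routine.)

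The main obstacle is purely bookkeeping: carrying the logarithmic expansions to second order consistently, making sure every $O(1/p^2)$ contribution is accounted for, including the ones arising from the $p$-dependence of $c$ itself and from the $-1/p^2$ inside the second logarithm interacting with the prefactor $(p-1)$. A cleaner alternative, which I would actually prefer to write up, is to avoid logarithms: substitute $y = p+1 - \tfrac{3}{2p} + \tfrac{\varepsilon}{p}$ directly into $f(y) := p^p y^{p-2} - (py-1)^{p-1}$, factor out $(p^2)^{p-1}$, and show that $\left(\tfrac{y}{p}\right)^{p-2}\left(\tfrac{1}{p^2}\right) \big/ \left(\tfrac{py-1}{p^2}\right)^{p-1} \to 1$ forces $\varepsilon \to 0$; here $\left(\tfrac{y}{p}\right)^{p} = \left(1 + \tfrac1p - \tfrac{3}{2p^2} + \tfrac{\varepsilon}{p^2}\right)^p$ and $\left(\tfrac{py-1}{p^2}\right)^{p} = \left(1 + \tfrac1p - \tfrac{3}{2p^2} + \tfrac{\varepsilon}{p^2} - \tfrac1{p^2}\right)^p$, and the standard limit $(1 + a/p + b/p^2)^p \to e^a$ with the $1/p$-correction $e^a(1 + (b - a^2/2)/p + o(1/p))$ makes the ratio of these two expressions equal to $\exp$ of something tending to $0$ iff $\varepsilon \to 0$ — reducing everything to the single well-understood asymptotic $(1 + a/p + b/p^2)^p = e^{a}\bigl(1 + \tfrac{2b - a^2}{2p} + o(1/p)\bigr)$. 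Either way the result follows; I expect no genuine difficulty beyond careful arithmetic.
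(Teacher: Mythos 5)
Your proposal is correct, and it follows the same overall skeleton as the paper's proof---locate the root in $(p,p+1)$ by the intermediate value theorem at the same two endpoints and invoke Lemma~\ref{sillypoly} with $t=py$ for maximality, then extract the correction term by a two-stage perturbation---but your execution of both stages is genuinely different and in places cleaner. For the endpoint sign check, the paper expands $f(z)=(z-1/p^2)^{p-1}-z^{p-2}$ binomially at $z=1+1/p$ and bounds an alternating tail (with a separate check for $p=2$); your reduction of the sign at $y=p+1$ to $\bigl(1-\tfrac{1}{p(p+1)}\bigr)^{p-1}>\tfrac{p}{p+1}$, settled by Bernoulli's inequality, is shorter and uniform in $p\ge 2$. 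For the asymptotics, the paper substitutes $z=1+1/p+C/p^2$ into the binomial expansion and bounds the tail to show first that $C$ is bounded and then $C=-3/2+o(1)$; you instead take logarithms, use $y\in(p,p+1)$ to write $y=p+c$ with $c\in(0,1)$, and expand $\log(1+u)$ to second order, first getting $c=1+O(1/p)$ and then refining. I have checked that carrying your second-order bookkeeping through does yield $c=1-\tfrac{3}{2p}+O(1/p^2)$, which is even slightly stronger than the claimed $o(1/p)$; when writing it up, solve directly for $\delta=c-1=O(1/p)$ rather than positing the ansatz $c=1+d/p+o(1/p)$, since that form is not known in advance. One caution about the ``cleaner alternative'' you say you would prefer: with $\varepsilon$ defined by $y=p+1-\tfrac{3}{2p}+\tfrac{\varepsilon}{p}$, the bound $y\in(p,p+1)$ only gives $\varepsilon\in(-p+\tfrac{3}{2},\tfrac{3}{2})$, so the coefficient $b$ in $(1+a/p+b/p^2)^p$ is not a priori bounded and the fixed-coefficient asymptotic you quote does not apply until you have first shown $y=p+1+o(1)$ (e.g.\ by your first-order logarithmic step); with that preliminary bound the alternative also works, but it is not self-contained as stated.
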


\begin{proof}
We divide the polynomial equation throughout by $p^{2p-2}$ to obtain
\begin{equation*}
\left( \frac{y}{p} \right) ^{p-2} = \left( \frac{y}{p} - \frac{1}{p^2}\right)^{p-1}.
\end{equation*} We will write $z=y/p$. The plan now is to justify that this has a root $z\in (1,1+1/p)$ by using the intermediate value theorem applied to the following polynomial:
\begin{align*}
f(z)=\left( z- \frac{1}{p^2} \right)^{p-1} - z^{p-2}.
\end{align*} 
It is easy to see that $f(1)<0$. Therefore it now suffices for us to show that $f(1+1/p)>0$. This is straightforward to verify when $p=2$, so instead assume $p\geq 3$. We use the binomial expansion to obtain
\begin{align*}
f(z)=z^{p-1}-z^{p-2} - (p-1)\frac{z^{p-2}}{p^2} + \frac{(p-1)(p-2)}{2} \frac{z^{p-3}}{p^4} + \sum_{i=3}^{p-1}(-1)^i{ p-1 \choose i } \frac{z^{p-1-i}}{p^{2i}},
\end{align*}
where the rightmost summation is taken to be $0$ in the case $p=3$. This can be written as
\begin{align*}
f(z)=z^{p-3}\left[ z\left( z-1-\frac{1}{p}+\frac{1}{p^2}\right) +\frac{(p-1)(p-2)}{2p^4} \right] + \sum_{i=3}^{p-1}(-1)^i{ p-1 \choose i } \frac{z^{p-1-i}}{p^{2i}}.
\end{align*} From now on assume $z>1$. By dividing throughout by $z^{p-3}$, we find that
\begin{align} \label{usefulerror}
\frac{f(z)}{z^{p-3}} = \left[ z\left( z-1-\frac{1}{p}+\frac{1}{p^2}\right) +\frac{(p-1)(p-2)}{2p^4} \right] + \sum_{i=3}^{p-1}(-1)^i{ p-1 \choose i } \frac{z^{2-i}}{p^{2i}},
\end{align}
and hence
\begin{align*}
\frac{f(z)}{z^{p-3}} \geq \left[ z\left( z-1-\frac{1}{p}+\frac{1}{p^2}\right) +\frac{(p-1)(p-2)}{2p^4} \right] - \sum_{\substack{i \ \mathrm{odd}\\ i=3}}^{p-1}{ p-1 \choose i } \frac{z^{2-i}}{p^{2i}}.
\end{align*}
Since ${p-1 \choose i}$ is bounded from above by $p^i$ we have
\begin{align*}
\frac{f(z)}{z^{p-3}} \geq \left[ z\left( z-1-\frac{1}{p}+\frac{1}{p^2}\right) +\frac{(p-1)(p-2)}{2p^4} \right] - \sum_{\substack{i \ \mathrm{odd}\\ i=3}}^{p-1}p^i \frac{z^{2-i}}{p^{2i}}.
\end{align*}
Notice that the summation in the final term can be manipulated to obtain
\begin{align*}
- \sum_{\substack{i \ \mathrm{odd}\\ i=3}}^{p-1}p^i \frac{z^{2-i}}{p^{2i}}>- (z^{-1}p^{-3}+z^{-3}p^{-5}+z^{-5}p^{-7}+ \cdots ) = - \frac{1}{zp^3 \left(1-\frac{1}{z^2p^2}\right)}.
\end{align*} 
Hence for all $z>1$
\begin{align*}
\frac{f(z)}{z^{p-3}} >\left[ z\left( z-1-\frac{1}{p}+\frac{1}{p^2}\right) +\frac{(p-1)(p-2)}{2p^4} \right] - \frac{1}{zp^3 \left(1-\frac{1}{z^2p^2}\right)}.
\end{align*} Now substituting in $z=1+\frac{1}{p}$, we get
\begin{align*}
\frac{f(1+\frac{1}{p})}{(1+\frac{1}{p})^{p-3}}>\left[ \frac{3}{2p^2}-\frac{1}{2p^3} +\frac{1}{p^4} \right] - {\frac{1}{zp^3 \left(1-\frac{1}{z^2p^2}\right)}},
\end{align*}
where the rightmost denominator is
\begin{align*}
zp^3 \left(1-\frac{1}{z^2p^2}\right)=(p^3+p^2)\left(1-\frac{1}{p^2+2p+1}\right) > p^3,
\end{align*}
from which it follows that
\begin{align*}
\frac{f(1+\frac{1}{p})}{(1+\frac{1}{p})^{p-3}} > \left[ \frac{3}{2p^2}-\frac{1}{2p^3} +\frac{1}{p^4} \right]-\frac{1}{p^3} > \frac{1}{p^4}>0.
\end{align*}

Thus, we have proved that $f(1+\frac{1}{p})>0$. Since $f(1)<0$, by the intermediate value theorem there is a root $z\in(1,1+1/p)$ and as $z = y/p$, there is a root $y\in(p,p+1)$ as claimed. Hence, by Lemma~\ref{sillypoly} (applied with $t=py$) the root $y$ is the maximal root of our polynomial because it is the unique root in the interval $(\frac{1}{p},\infty)$.

We now prove the asymptotic formula \eqref{Eq:asymptotic-y} as $p\rightarrow\infty$. Let $z$ be the unique root of $f(z)$ in the interval $(1, 1+\frac{1}{p})$, and set $z=1+1/p+C/p^2$ -- note that $C$ depends on $p$, and $-p < C < 0$. Substitute this formula for $z$ into \eqref{usefulerror}. Elementary algebraic manipulations then yield
\begin{align*}
0=\frac{f(z)}{z^{p-3}}=\left[ \frac{2C+3}{2p^2} +\frac{2C-1}{2p^3}+\frac{C^2 +C +1}{p^4} \right] + \sum_{i=3}^{p-1}(-1)^i{ p-1 \choose i } \frac{z^{2-i}}{p^{2i}}.
\end{align*}
Note that since $z >1$ by assumption, we have $z^{2-i} < 1$ whenever $i \geq 3$, and thus
\begin{align*}
\left|\sum_{i=3}^{p-1}(-1)^i{ p-1 \choose i } \frac{z^{2-i}}{p^{2i}}\right|&\leq \sum_{i=3}^{p-1}{ p-1 \choose i } \frac{z^{2-i}}{p^{2i}} \leq \sum_{i=3}^{p-1}{ p-1 \choose i } \frac{1}{p^{2i}}
\\ &\leq \sum_{i=3}^{p-1} \frac{1}{p^{i}} \leq \frac{1}{p^3} + \frac{p-5}{p^4} < \frac{2}{p^3}.
\end{align*}
Furthermore, since $|C|\leq p$, we have that
\begin{align*}\left|\frac{2C-1}{2p^3}+\frac{C^2 +C +1}{p^4}\right|\leq \frac{2|C|+|1|}{2p^3}+\frac{|C^2| +|C| +|1|}{p^4} < \frac{5}{p^2}.
\end{align*}
Therefore as $p\rightarrow\infty$, we get
\begin{align*}
0= \frac{f(z)}{z^{p-3}} =  \frac{2C+3}{2p^2} +O\left( \frac{1}{p^2}\right),
\end{align*}
In particular,
$0=2C+3+O(1),$ that is, $C=-3/2+O(1)$ and therefore $C$ is bounded.
This allows us to refine the estimations above and observe that $\frac{2C-1}{2p^3}+\frac{C^2 +C +1}{p^4}$ is also $o(1/p^2)$, hence
\begin{align*}
	0= \frac{f(z)}{z^{p-3}} =  \frac{2C+3}{2p^2} +o\left( \frac{1}{p^2}\right),
\end{align*}
which in turn yields $C=-3/2+o(1)$.
 Substituting this back into $z=1+1/p+C/p^2$, we obtain that the  unique root for the polynomial in $z=y/p$ is $1+1/p-3/(2p^2)+o(1/p^2)$. This proves \eqref{Eq:asymptotic-y}.
\end{proof}

\subsection{Exponential growth of free inverse monoids}

We will now use the sizes of the spheres determined in  \S\ref{Sec:sphere-counting} to determine the exponential growth rate of $\FIM_r$ when $r>1$. 

Recall that the growth rate is defined as
\[
\limsup_{K\rightarrow\infty} \left( |S(K)| \right)^{\frac{1}{K}}.
\]
There are only polynomially many terms in our formulae \eqref{eqn:S(K)-K-even} and \eqref{eqn:S(K)-K-odd} for $|S(K)|$ with respect to $K$, so the exponential growth rate $y$ is determined by the growth rate of the sequence of maximal terms. Moreover, by Theorem~\ref{Thm:growth-of-idempotents}, the exponential growth rate of the idempotents is strictly less than $2 |X| -1$, which is a lower bound on the exponential growth rate of $\FIM(X)$, so the term (which occurs in the even case only) corresponding to trunk length $0$ may be ignored. Thus the growth rate is equal to
\begin{equation}
\label{eqn:growth}
\limsup_{K\rightarrow \infty} \max_{\substack{t+2k= K\\ t \geq 1, k \geq 0}} |M(t, k)|^{\frac{1}{K}}.
\end{equation}
Clearly we can choose a sequence of values $t_i, k_i$ and $K_i$ for $t, k$ and $K$ respectively such that the limit $\lim_{i \to \infty}|M(t_i, k_i)|^{\frac{1}{K_i}}$ of the corresponding terms converges to the growth rate. We begin with a lemma which will help us identify key properties of the sequences $t_i, k_i$. 

\begin{lemma}\label{lemma_yvalues}
Let $K_i$, $t_i$ and $k_i$ ($i \in \N$) be non-negative integer sequences such that $t_i \geq 1$,  $K_i = t_i + 2k_i$, and $K_i \to \infty$ as $i \to \infty$. Assume furthermore that
\[
y = \lim_{i \to \infty} M(t_i, k_i)^{\frac{1}{K_i}}
\]
is defined and finite,
where $M(t_i, k_i)$ denotes the set of Munn trees over the alphabet $X$ with $t_i$ trunk edges and $k_i$ branch edges as in \S3. If as $i \to \infty$ either $k_i$ is bounded above or $\frac{t_i-1}{k_i}$ is not bounded above, then $y \leq p$. Otherwise, if $x$ is any accumulation point of the sequence $\frac{t_i-1}{k_i}$, then
\begin{multline}\label{Eq:y-expression-from-lemma}
y = \exp \frac{1}{x+2} \biggl( x\log (p)+ \left( p+ x(p-1) \right) \log \left( p+ x(p-1) \right) \\ - \left( (p-1) + x(p-1) \right) \log \left( (p-1) + x(p-1) \right)
\biggr).
\end{multline}

\end{lemma}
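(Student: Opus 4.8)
The plan is to estimate $M(t_i,k_i)^{1/K_i}$ directly from the closed formula \eqref{eqn:number-of-trees}, using Stirling's approximation as in the idempotent case, and then to show that the polynomial prefactor $(p+1)p^{t-1}$ and the rational prefactor $\tfrac{2p+(t-1)(p-1)}{kp+2p+(t-1)(p-1)}$ contribute nothing to the exponential rate, so that only the binomial coefficient matters. First I would dispose of the degenerate regimes. If $k_i$ is bounded above along a subsequence, then $M(t_i,k_i) \le (p+1)p^{t_i-1} \cdot (\text{bounded})$ and $K_i = t_i + 2k_i \sim t_i$, so $M(t_i,k_i)^{1/K_i} \to p$; hence $y \le p$ (one has to be slightly careful that we took a limit, not a limsup, but passing to the relevant subsequence is harmless). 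Similarly, if $\tfrac{t_i-1}{k_i} \to \infty$ along a subsequence, then the trunk dominates: writing $K_i = t_i + 2k_i$ with $k_i = o(t_i)$, the binomial coefficient $\binom{k_ip + 2p + (t_i-1)(p-1)}{k_i}$ is at most $(k_i p + O(t_i))^{k_i} = e^{o(t_i)}$ (since $\log$ of it is $O(k_i \log t_i) = o(t_i)$), while $(p+1)p^{t_i-1} = e^{t_i \log p + O(1)}$ and $K_i \sim t_i$, so again $M(t_i,k_i)^{1/K_i} \to p$ and $y \le p$.

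In the main regime, $k_i \to \infty$ (after passing to a subsequence) and $\tfrac{t_i-1}{k_i}$ stays bounded, so it has an accumulation point $x \ge 0$; pass to a further subsequence along which $\tfrac{t_i-1}{k_i} \to x$. Write $a_i = t_i - 1$ and apply Stirling to all three factorials in $\binom{k_ip + 2p + a_i(p-1)}{k_i} = \dfrac{(k_ip + 2p + a_i(p-1))!}{k_i!\,(k_i(p-1) + 2p + a_i(p-1))!}$. The $\sqrt{2\pi(\cdot)}$ factors and the powers of $e$ cancel under the $K_i$-th root exactly as in the computation preceding Theorem~\ref{Thm:growth-of-idempotents} (their logarithms are $O(\log K_i)$, hence $o(K_i)$), and likewise $\bigl((p+1)p^{t_i-1}\bigr)^{1/K_i}$ and the rational prefactor$^{1/K_i}$ both tend to... well, $(p+1)p^{t_i-1}$ does \emph{not} vanish: its logarithm is $t_i \log p + O(1)$, which is $\Theta(K_i)$ whenever $x < \infty$, so this factor must be retained. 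The clean way is to factor $(p+1)p^{t_i-1} = p^{t_i}\cdot\frac{p+1}{p}$ and absorb $p^{t_i}$ into the exponent, using $\tfrac{t_i}{K_i} \to \tfrac{x+1}{x+2}$. Then, setting $n_i = k_i p + 2p + a_i(p-1)$ and $m_i = k_i(p-1) + 2p + a_i(p-1)$, Stirling gives
\[
\log M(t_i,k_i) = n_i \log n_i - k_i \log k_i - m_i \log m_i + t_i \log p + o(K_i).
\]
Now divide by $K_i$ and take the limit. Since $\tfrac{n_i}{k_i} \to p + x(p-1)$, $\tfrac{m_i}{k_i} \to (p-1) + x(p-1)$, $\tfrac{k_i}{K_i} \to \tfrac{1}{x+2}$, and $n_i - k_i - m_i = t_i - 1 = a_i$ so that the "$\log k_i$" pieces telescope correctly (one collects $n_i \log k_i - k_i \log k_i - m_i \log k_i = (n_i - k_i - m_i)\log k_i = a_i \log k_i = o(K_i)$), one is left with
\[
\log y = \frac{1}{x+2}\Bigl( (p + x(p-1))\log(p + x(p-1)) - (p-1 + x(p-1))\log(p-1 + x(p-1)) + x \log p \Bigr),
\]
which is exactly \eqref{Eq:y-expression-from-lemma} after exponentiating. (Here I used $\tfrac{t_i-1}{K_i} \to \tfrac{x}{x+2}$ to handle the surviving $t_i \log p$ term — rewriting $t_i = (t_i-1)+1$ and noting the "$+1$" contributes $O(1/K_i)$.)

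The main obstacle is bookkeeping: making precise that \emph{every} lower-order contribution — the square-root factors from Stirling, the rational prefactor, the $(p+1)/p$, the errors in Stirling's formula, the discrepancy between $t_i$ and $t_i-1$, and the stray $\log k_i$ terms that must be shown to telescope to something $o(K_i)$ rather than merely being individually $o(K_i)$ (they are $\Theta(K_i \log K_i)$ individually!) — really is negligible after dividing by $K_i$ and passing to the limit. The telescoping of the $\log k_i$ terms is the genuinely delicate point: one cannot bound them termwise, and must instead group $n_i\log(\cdot) - k_i\log(\cdot) - m_i\log(\cdot)$ as $n_i\log\tfrac{n_i}{k_i} - m_i\log\tfrac{m_i}{k_i} + (n_i - m_i - k_i)\log k_i$ and then use the exact identity $n_i - m_i - k_i = t_i - 1$ together with $\tfrac{(t_i-1)\log k_i}{K_i}\to 0$ (valid precisely because $\tfrac{t_i-1}{k_i}$ is bounded, i.e. we are in the non-degenerate regime). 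Everything else is routine once this regrouping is in place.
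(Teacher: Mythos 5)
Your overall route coincides with the paper's: start from the closed formula \eqref{eqn:number-of-trees}, split into the regimes ($k_i$ bounded, $\frac{t_i-1}{k_i}$ unbounded, and $k_i\to\infty$ with $\frac{t_i-1}{k_i}\to x$), apply Stirling in the main regime, and argue that all prefactors are negligible at scale $K_i$; your final expression agrees with \eqref{Eq:y-expression-from-lemma}. However, two of your steps fail as written. In the regime $\frac{t_i-1}{k_i}\to\infty$, the estimate ``$\binom{k_ip+2p+(t_i-1)(p-1)}{k_i}\le (k_ip+O(t_i))^{k_i}=e^{o(t_i)}$ since $O(k_i\log t_i)=o(t_i)$'' is not valid: the hypothesis only gives $k_i=o(t_i)$, and for instance $k_i=\lfloor t_i/\log\log t_i\rfloor$ satisfies $\frac{t_i}{k_i}\to\infty$ while $k_i\log t_i\gg t_i$, so the crude bound $\binom{N}{k}\le N^k$ does not make the binomial negligible against $p^{t_i}$ after taking $K_i$-th roots. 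You need the sharper bound $\binom{N}{k}\le\left(\frac{eN}{k}\right)^{k}$, whose logarithm is $k_i\bigl(1+\log(N_i/k_i)\bigr)\approx k_i\log\bigl((p-1)\tfrac{t_i-1}{k_i}\bigr)=o(t_i)$ because $\frac{\log a}{a}\to 0$ as $a=\frac{t_i-1}{k_i}\to\infty$ (the paper instead runs the Stirling computation in this regime as well and finds the limit is exactly $p$).

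Second, the point you yourself flag as delicate is mishandled: with $n_i=k_ip+2p+(t_i-1)(p-1)$ and $m_i=k_i(p-1)+2p+(t_i-1)(p-1)$ one has $m_i=n_i-k_i$, so $n_i-k_i-m_i=0$, not $t_i-1$ as you claim; and your fallback assertion that $\frac{(t_i-1)\log k_i}{K_i}\to 0$ ``because $\frac{t_i-1}{k_i}$ is bounded'' is false whenever $x>0$, since then $\frac{(t_i-1)\log k_i}{K_i}\sim\frac{x}{x+2}\log k_i\to\infty$. The collected $\log k_i$ term is in fact identically zero by the exact identity $n_i=k_i+m_i$, which is why your final formula comes out right, but the justification you give would not survive if the identity were what you stated. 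Two minor slips in the same vein: $\frac{t_i}{K_i}\to\frac{x}{x+2}$, not $\frac{x+1}{x+2}$ (you use the correct value at the end), and in the $k_i$-bounded case the binomial factor is polynomially bounded in $t_i$ rather than bounded, which is still harmless under the $K_i$-th root.
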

\begin{proof}
By \eqref{eqn:number-of-trees} we have 
\begin{align}
y & =\lim_{i\rightarrow \infty} \left(  (p+1)p^{t_i-1}\frac{2p+(t_i-1)(p-1)}{k_ip+2p+(t_i-1)(p-1)} \binom{k_ip+2p+(t_i-1)(p-1)}{k_i}\right)^{ \frac{1}{K_i}} \nonumber \\
& =\lim_{i\rightarrow \infty}\underbrace{ \left(  (p+1)\frac{2p+(t_i-1)(p-1)}{k_ip+2p+(t_i-1)(p-1)} \right)^{ \frac{1}{K_i}}}_{\to 1 \text{ as } i \to \infty} \left( p^{t_i-1} \binom{k_ip+2p+(t_i-1)(p-1)}{k_i}\right)^{ \frac{1}{K_i}} \nonumber \\
& =\lim_{i\rightarrow \infty} \left( p^{t_i-1} \binom{k_ip+2p+(t_i-1)(p-1)}{k_i}\right)^{ \frac{1}{K_i}}. \label{Eq:binomial-limit}
\end{align}

Suppose first that the sequence $k_i$ is bounded above by some constant $B$. Then, since $\binom{n}{r} \leq n^r$, we can bound the last term in \eqref{Eq:binomial-limit}, yielding
\begin{align*}
y \leq \lim_{i\rightarrow \infty} \left( p^{t_i-1} \right)^{\frac{1}{K_i}} \underbrace{\left( (Bp+2p+(t_i-1)(p-1))^B \right)^{\frac{1}{K_i}}}_{\to 1 \text{ as $i \to \infty$}} = \lim_{i\rightarrow \infty} \left( p^{t_i-1} \right)^{\frac{1}{K_i}} \leq p,
\end{align*}
where the final inequality is due to the fact that $t_i \leq K_i$ for all $i$. Thus, the claim is proved in the case where $k_i$ is uniformly bounded above, so assume that it is not.

In this case, by passing to a subsequence if necessary, we may assume that $k_i \to \infty$ as $i \to \infty$. Hence the three sequences $k_ip+2p+(t_i-1)(p-1)$, $k_i$, and $k_i(p-1)+2p+(t_i-1)(p-1)$, all tend to infinity as $i \to \infty$. Thus, we may use Stirling's approximation for each of these three sequences to replace 
\[
\binom{k_ip+2p+(t_i-1)(p-1)}{k_i}
\]
in \eqref{Eq:binomial-limit} by the expression
\[
\frac{ (k_ip+2p+(t_i-1)(p-1))^{k_ip+2p+(t_i-1)(p-1)}}{k_i^{k_i}{(k_i(p-1)+2p+(t_i-1)(p-1))^{k_i(p-1)+2p+(t_i-1)(p-1)}}} \alpha(t_i,k_i)
\]
where $\alpha$ is a rational function of the $t_i$ and $k_i$ such that $\alpha(t_i, k_i)^{\frac{1}{K_i}} \to 1$ as $i \to \infty$. Doing this substitution into \eqref{Eq:binomial-limit} thus eliminates the $\alpha(t_i, k_i)$ in the limit, and we are left to analyse
\[
y =\lim_{i\rightarrow \infty} \left( p^{t_i-1} \frac{ (k_ip+2p+(t_i-1)(p-1))^{k_ip+2p+(t_i-1)(p-1)}}{k_i^{k_i}{(k_i(p-1)+2p+(t_i-1)(p-1))^{k_i(p-1)+2p+(t_i-1)(p-1)}}}\right)^{ \frac{1}{K_i}}.
\]
From this point on, for ease of calculation, we set $n_i = t_i-1$ so that $K_i = t_i + 2 k_i = n_i + 1 + 2 k_i$ and take logarithms, yielding
\begin{align}\label{Eq:y-initial-expression}
y = \lim_{i\rightarrow\infty} \exp \frac{1}{n_i+1+2k_i}\left( n_i \log(p) +\log Q(p, n_i, k_i) \right),
\end{align}
where we have set
\[
Q(p, n_i, k_i) = \frac{ (k_ip+2p+n_i(p-1))^{k_ip+2p+n_i(p-1)}}{k_i^{k_i}{(k_i(p-1)+2p+n_i(p-1))^{k_i(p-1)+2p+n_i(p-1)}}}.
\]
Dividing the numerator and denominator of $Q(p, n_i,k_i)$ by $k_i^{k_ip + 2p + n_i(p-1)}$, then factoring out $k_i$ factors from all exponents, we obtain 
\begin{equation}\label{Eq:logQ-expr-1}
\log Q(p, n_i, k_i) = k_i \log \frac{\left(p + \frac{2p}{k_i} + \frac{n_i}{k_i}(p-1)\right)^{p + \frac{2p}{k_i} + \frac{n_i}{k_i}(p-1)}}{\left( p-1 + \frac{2p}{k_i} + \frac{n_i}{k_i}(p-1)\right)^{p-1 + \frac{2p}{k_i} + \frac{n_i}{k_i}(p-1)}}.
\end{equation}
By using the properties of logarithmic functions, we can rewrite \eqref{Eq:logQ-expr-1} as 
\begin{align}\label{Eq:logQ-expr-2}
\log Q(p, n_i, k_i) &=  k_i \left[ \left( p + \frac{2p}{k_i} + \frac{n_i}{k_i} (p-1) \right) \log \left( p + \frac{2p}{k_i} + \frac{n_i}{k_i} (p-1) \right) \nonumber \right. \\
&\left. - \left( p-1 + \frac{2p}{k_i} + \frac{n_i}{k_i}(p-1) \right) \log\left(p-1 + \frac{2p}{k_i} + \frac{n_i}{k_i}(p-1)\right)\right].
%the dot delimiters prevent errors from the square brackets closing and opening on different lines
\end{align}
Now, using the simple equality
\[
\frac{k_i}{n_i + 1 + 2k_i} = \frac{1}{\frac{n_i}{k_i} + \frac{1}{k_i} + 2}
\]
together with \eqref{Eq:logQ-expr-2}, we can rewrite the expression \eqref{Eq:y-initial-expression} into 
\begin{align} \label{beforexlimit}
y = \lim_{i \rightarrow \infty} &\exp \frac{1}{\frac{n_i}{k_i}+\frac{1}{k_i}+2} \Biggl[ \frac{n_i}{k_i}\log(p) + \nonumber \\ 
&+\overbrace{\left(p+\frac{2p}{k_i}+\frac{n_i}{k_i}(p-1)\right) \log \left( p+\frac{2p}{k_i} +\frac{n_i}{k_i}(p-1) \right)}^{=: \: \beta(p, n_i, k_i)} \nonumber 
\\ &- \underbrace{\left( (p-1) + \frac{2p}{k_i} + \frac{n_i}{k_i}(p-1) \right) \log \left( (p-1) + \frac{2p}{k_i} + \frac{n_i}{k_i}(p-1) \right)}_{=:\: \gamma(p, n_i, k_i)}
\biggr].
\end{align} % 12
Suppose first that $\frac{n_i}{k_i}$ is not uniformly bounded above. Then we may refine our subsequence so that $\frac{n_i}{k_i} \to \infty$ as $i \to \infty$. We consider the behaviour of the quantity $\beta(p, n_i, k_i) - \gamma(p, n_i, k_i)$ appearing in \eqref{beforexlimit}. It can be rewritten as
\[
\log \left( p + \frac{2p}{k_i} + \frac{n_i}{k_i}(p-1) \right) + \underbrace{\left( p-1 + \frac{2p}{k_i} + \frac{n_i}{k_i}(p-1) \right) \log \left( \frac{p+\frac{2p}{k_i} + \frac{n_i}{k_i}(p-1)}{p-1 + \frac{2p}{k_i} + \frac{n_i}{k_i}(p-1)}\right)}_{\to 1 \text{ as $i \to \infty$}}
\]
and hence as $i \to \infty$, we have that $\beta(p, n_i, k_i) - \gamma(p, n_i, k_i)$ tends to
\[
\log \left( p + \frac{2p}{k_i} + \frac{n_i}{k_i}(p-1) \right) + 1.
\]
In this way, the limit \eqref{beforexlimit} becomes
\begin{align*}
y &= \lim_{i \to \infty} \exp \frac{1}{\frac{n_i}{k_i} + \frac{1}{k_i} + 2}\left( \frac{n_i}{k_i} \log(p) + \log\left( p + \frac{2p}{k_i} + \frac{n_i}{k_i} (p-1) \right) + 1 \right) \\
&= \lim_{i \to \infty} \exp \left( \underbrace{\frac{\frac{n_i}{k_i}}{\frac{n_i}{k_i} + \frac{1}{k_i} + 2}}_{\to 1 \text{ as $i \to \infty$}} \log(p) + \underbrace{\frac{\log\left( p + \frac{2p}{k_i} + \frac{n_i}{k_i} (p-1) \right) + 1}{\frac{n_i}{k_i} + \frac{1}{k_i} + 2}}_{\to 0 \text{ as $i \to \infty$}} \right) = p.
\end{align*}
This is precisely what was to be shown. 

There remains only the case where $\frac{n_i}{k_i}$ is bounded above. In this case by refining the sequence we may assume it converges to some $x\in [0,\infty)$. In this case the expression \eqref{beforexlimit} can, by taking the limit and recalling that $k_i\rightarrow \infty$, be seen to be changed into precisely 
\begin{multline*}
	y = \exp \frac{1}{x+2} \biggl( x\log (p)+ \left( p+ x(p-1) \right) \log \left( p+ x(p-1) \right) \\ - \left( (p-1) + x(p-1) \right) \log \left( (p-1) + x(p-1) \right)
	\biggr)
\end{multline*}
as required. 
\end{proof}

Having analyzed the limit in Lemma~\ref{lemma_yvalues}, we set
\begin{align}
\label{eqn:h(x)}
		h(x)=\frac{1}{x+2} \biggl( x\log (p)+ \nonumber
		\left( p+ x(p-1) \right) \log \left( p+ x(p-1) \right) \\ -\left( (p-1) + x(p-1) \right) \log \left( (p-1) + x(p-1) \right)
		\biggr).
	\end{align}
With this notation, an easy consequence of Lemma \ref{lemma_yvalues} is now the following key lemma.

\begin{lemma}
\label{lemma:growth_rate_is_max}
The exponential growth rate of $\FIM(X)$ is equal to 
\[
\max\left\{p, \sup_{x \geq 0} \exp h(x)\right\}.
\]
\end{lemma}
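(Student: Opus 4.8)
The plan is to establish the two inequalities separately. Write $y_0$ for the exponential growth rate of $\FIM(X)$. By the discussion surrounding \eqref{eqn:growth}, $y_0=\limsup_{K\to\infty} a_K$, where $a_K=\max_{t+2k=K,\ t\geq 1,\ k\geq 0}|M(t,k)|^{1/K}$; and since $\FIM(X)$ is generated by the $2|X|$ elements $X\cup X^{-1}$, we have $a_K\leq |S(K)|^{1/K}\leq 2|X|$ for all $K$, so every sequence of values $|M(t,k)|^{1/K}$ under consideration is bounded.

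For the inequality $y_0\leq\max\{p,\sup_{x\geq 0}\exp h(x)\}$, I would first choose a subsequence $K_i\to\infty$ along which $a_{K_i}\to y_0$, and for each $i$ pick $t_i\geq 1$ and $k_i\geq 0$ with $t_i+2k_i=K_i$ attaining the maximum, so that $|M(t_i,k_i)|^{1/K_i}\to y_0$. Passing to a further subsequence, exactly one of the following holds: $k_i$ is bounded above; or $\frac{t_i-1}{k_i}$ is unbounded, in which case I refine once more so that $\frac{t_i-1}{k_i}\to\infty$; or $\frac{t_i-1}{k_i}$ converges to some $x\in[0,\infty)$. In the first two cases Lemma~\ref{lemma_yvalues} gives $y_0\leq p$, and in the third it gives $y_0=\exp h(x)\leq\sup_{x\geq 0}\exp h(x)$; either way the desired bound follows.

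For the reverse inequality I must show $y_0\geq p$ and $y_0\geq\exp h(x)$ for each fixed $x\geq 0$. The bound $y_0\geq p$ is immediate by taking $k=0$ and $t=K$: then $|M(K,0)|=(p+1)p^{K-1}$ by \eqref{eqn:number-of-trees} (equivalently, the $(p+1)p^{K-1}$ freely reduced words of length $K$ represent distinct elements of length $K$), so $a_K\geq|M(K,0)|^{1/K}\to p$. For the bound $y_0\geq\exp h(x)$ with $x\geq 0$ fixed, set $k_i=i$, $n_i=\lfloor x i\rfloor$, $t_i=n_i+1\geq 1$ and $K_i=t_i+2k_i$, so that $K_i\to\infty$, $k_i\to\infty$ and $\frac{t_i-1}{k_i}=\frac{n_i}{k_i}\to x$. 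The computation carried out in the third case of the proof of Lemma~\ref{lemma_yvalues} shows that under exactly these hypotheses $\lim_{i\to\infty}|M(t_i,k_i)|^{1/K_i}$ exists and equals $\exp h(x)$ (or one may pass to a convergent subsequence and invoke Lemma~\ref{lemma_yvalues} directly). Since $|M(t_i,k_i)|^{1/K_i}\leq a_{K_i}$ and $K_i\to\infty$, taking $\limsup$ over $i$ yields $\exp h(x)\leq y_0$. Combining everything proves the equality (and shows in passing that $\sup_{x\geq 0}\exp h(x)\leq y_0<\infty$).

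The only real obstacle is bookkeeping: one must carry out the subsequence extractions in the correct order -- first to force $a_{K_i}\to y_0$, then to land in one of the three mutually exclusive regimes of Lemma~\ref{lemma_yvalues} -- and verify that Lemma~\ref{lemma_yvalues} genuinely applies, i.e.\ that the relevant limit exists along the chosen subsequence, which it does because $a_{K_i}\to y_0$ is finite. No further estimation is needed; all of the analytic content has already been isolated in Lemma~\ref{lemma_yvalues}.
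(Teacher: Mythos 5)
Your proof is correct and follows essentially the same route as the paper: the upper bound comes from applying Lemma~\ref{lemma_yvalues} to a maximizing sequence after sorting it into the lemma's three regimes, and the lower bound comes from explicitly constructing sequences $(t_i,k_i)$ realizing $p$ and each value $\exp h(x)$. Your small variations (realizing $p$ directly via $|M(K,0)|=(p+1)p^{K-1}$, and choosing $k_i=i$, $t_i=\lfloor xi\rfloor+1$ rather than $k_i=\lfloor K_i/(x+2)\rfloor$) are only cosmetic refinements of the same argument.
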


\begin{proof}
It follows immediately by Lemma \ref{lemma_yvalues} that the growth rate is at most the maximum above. For the converse inequality, notice that for any sequence $K_i$ with $i \to \infty$, there exist sequences $t_i, k_i$ with $K_i=t_i+2k_i$ and satisfying either of the three properties described in the conditions of Lemma~\ref{lemma_yvalues}: there are clearly such sequences where $k_i$ is bounded above, and also sequences where $\frac{t_i-1}{k_i}$ is not bounded above. Finally, for any given $x \in [0, \infty)$, we can choose $t_i, k_i$ such that $\frac{t_i-1}{k_i}$ converges to $x$: indeed, it suffices to set each $k_i$ to be the integer part of $\frac{K_i}{x+2}$ and then define $t_i = K_i - 2k_i$. For any of the sequences above, by passing to a subsequence if necessary, we can also ensure that the limit $\lim_{i \to \infty} M(t_i, k_i)^{\frac{1}{K_i}}$ exists. It follows from  Lemma~\ref{lemma_yvalues} that we can realize any number in $\{p\} \cup \{\exp h(x): x \geq 0\}$ as a limit of a sequence $M(t_i, k_i)^{K_i}$, and clearly any such number must be a lower bound for the growth rate \eqref{eqn:growth}.
\end{proof}

We are ready to state and prove our second main theorem:

\begin{theorem}\label{expg}
Let $\FIM(X)$ be the free inverse monoid on $X$. Assume $|X|\geq 2$, and write $y$ for the exponential growth rate of $\FIM(X)$. Write $p=2|X|-1$. Then $y\in (p,p+1)$ is the largest real root of the polynomial equation
\begin{equation*}
p^p y^{p-2}-(py-1)^{p-1} = 0.
\end{equation*} In particular, $y$ is an algebraic number.
\end{theorem}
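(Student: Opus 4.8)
The plan is to apply Lemma~\ref{lemma:growth_rate_is_max} and then analyse the resulting one–variable optimisation problem, recognising its value as the root of \eqref{Eq:main-polynomial} supplied by Lemmas~\ref{sillypoly} and~\ref{largerank}. Throughout, set $m = m(x) := p + x(p-1)$, so that $(p-1)+x(p-1) = m-1$ and, writing $g(x) := (x+2)\,h(x)$ with $h$ as in \eqref{eqn:h(x)}, we have the smooth function
\[
g(x) = x\log p + m\log m - (m-1)\log(m-1), \qquad g'(x) = \log p + (p-1)\log\frac{m}{m-1}.
\]
I would first dispose of the constant $p$ in Lemma~\ref{lemma:growth_rate_is_max}. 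An elementary inequality gives $m\log m - (m-1)\log(m-1) > \log m$, so $g(x) - (x+2)\log p > \log(m/p^{2})$, which is positive precisely when $m > p^{2}$, i.e.\ when $x > p$. Hence $h(x) > \log p$ for every $x > p$, so $\sup_{x\ge 0}\exp h(x) > p$, and Lemma~\ref{lemma:growth_rate_is_max} yields $y = \sup_{x\ge 0}\exp h(x)$.

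Next I would locate this supremum and extract the polynomial. The function $h$ is continuous on $[0,\infty)$, it is bounded above, $\lim_{x\to\infty}h(x) = \log p$ (since $g(x)-(x+2)\log p$ grows only like $\log x$), and $\exp h(0) = \sqrt{p^{p}/(p-1)^{p-1}} < p$ by the remark following Theorem~\ref{Thm:growth-of-idempotents}. Since $\sup_{x\ge0}\exp h(x) > p$ strictly exceeds both boundary values, a routine compactness argument shows the supremum equals $\exp h(x^{*})$ for some $x^{*}\in(0,\infty)$ with $h'(x^{*})=0$. As $g = (x+2)h$, the condition $h'(x^{*})=0$ is equivalent to $(x^{*}+2)\,g'(x^{*}) = g(x^{*})$, and it also forces $\log y = h(x^{*}) = g(x^{*})/(x^{*}+2) = g'(x^{*})$, where $y := \exp h(x^{*})$. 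Substituting $x^{*} = (m-p)/(p-1)$ and $x^{*}+2 = (m+p-2)/(p-1)$ (with $m = m(x^{*})$) into $(x^{*}+2)g'(x^{*}) = g(x^{*})$ and simplifying — the $\log p$ terms combine to $2\log p$ and the remaining terms collapse — produces
\[
(p-1)\log(m-1) - (p-2)\log m = 2\log p, \qquad\text{i.e.}\qquad (m-1)^{p-1} = p^{2}m^{p-2}.
\]
On the other hand $\log y = g'(x^{*}) = \log p + (p-1)\log\frac{m}{m-1}$ gives $y = p\bigl(m/(m-1)\bigr)^{p-1}$; combining this with the previous display yields $y = p\,m^{p-1}/(p^{2}m^{p-2}) = m/p$, so $m = py$, and substituting $m = py$ back into $(m-1)^{p-1} = p^{2}m^{p-2}$ gives $(py-1)^{p-1} = p^{2}(py)^{p-2} = p^{p}y^{p-2}$, which is exactly \eqref{Eq:main-polynomial}.

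It then remains to identify which root of \eqref{Eq:main-polynomial} this is. Since $t\mapsto t\log t$ is increasing on $[1,\infty)$ while $m > m-1 \ge p-1 \ge 1$ and $x\log p \ge 0$, we have $g(x) > 0$ on $[0,\infty)$, hence $h > 0$ there and in particular $y = \exp h(x^{*}) > 1 > 1/p$. Applying Lemma~\ref{sillypoly} with $t = py$, $\alpha = 1$ and $\beta = p^{2}$ shows \eqref{Eq:main-polynomial} has exactly one solution with $py > 1$; by Lemma~\ref{largerank} this solution is the largest real root and lies in $(p,p+1)$. Since the computation above shows $\exp h(x^{*})$ is such a solution for \emph{every} interior critical point $x^{*}$ of $h$, all interior critical values of $h$ coincide with this root, and therefore so does $y = \sup_{x\ge0}\exp h(x)$. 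Hence $y\in(p,p+1)$ is the largest real root of \eqref{Eq:main-polynomial}, and being a root of a polynomial with integer coefficients it is algebraic.

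I expect the only genuinely delicate step to be the algebraic simplification of $(x^{*}+2)g'(x^{*}) = g(x^{*})$ into $(m-1)^{p-1} = p^{2}m^{p-2}$; the existence of the maximiser is soft analysis, and the concluding identification of the root is a direct appeal to Lemmas~\ref{sillypoly} and~\ref{largerank}. One subtlety worth handling carefully is that $h$ may a priori have several critical points, which is why it is cleanest to argue that every interior critical value is forced to be the same root of \eqref{Eq:main-polynomial}, rather than trying to prove uniqueness of the maximiser outright.
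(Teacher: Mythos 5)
Your proof is correct and follows essentially the same route as the paper: reduce to $\max\{p,\sup_{x\ge 0}\exp h(x)\}$ via Lemma~\ref{lemma:growth_rate_is_max}, set $h'(x)=0$ to obtain the same critical-point equation $(p-1)\log(m-1)-(p-2)\log m=2\log p$, deduce that the critical value is $m/p$ so that $m=py$, substitute to get \eqref{Eq:main-polynomial}, and identify the largest root via Lemmas~\ref{sillypoly} and~\ref{largerank}. The only difference is presentational: you make explicit that $h(x)>\log p$ for $x>p$ and hence that the supremum is attained at an interior critical point, a soft-analysis step the paper treats informally ("when $x$ maximizes $\exp h(x)$"), and your handling of possibly multiple critical points via uniqueness of the root $>1/p$ matches the paper's use of Lemma~\ref{sillypoly}.
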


\begin{proof}
The fact that the maximal root of the polynomial above lies inside $(p,p+1)$ is stated and proved in Lemma~\ref{largerank} so what remains to be shown is that
the exponential growth rate is a root, and is greater than $p$. By Lemma \ref{lemma:growth_rate_is_max}, we have
\[
y=\max\left\{p, \sup_{x \geq 0} \exp h(x)\right\}.
\]
It is easy to see that as $x\rightarrow \infty$, $\exp h(x)$ tends to $p$, while at $x=0$ the function is equal to the growth rate of the idempotents, which is less than $p$, hence it suffices to analyze the local extrema of $\exp h(x)$ and compare these values with $p$. Since the exponential function is monotonically increasing, the local extrema of $\exp h(x)$ and $h(x)$ occur at the same places, and we can directly determine the latter by computing the derivative of $h(x)$ and setting it to $0$. Multiplying the equation $h'(x)=0$ by $(x+2)^2$, cancelling terms and rearranging, we obtain
\begin{align}\label{Eq:h-derivative}
2\log p + (p-2) \log(p+x(p-1)) = (p-1) \log((p-1)+x(p-1)).
\end{align} 
Multiplying both sides by $(x+1)$, we get
\[
2(x+1)\log p + (p-2)(x+1) \log(p+x(p-1)) = (x+1)(p-1) \log((p-1)+x(p-1))
\]
and we find the right-hand side of the above expression in \eqref{eqn:h(x)}. Replacing it therein by the left-hand side, we find that when $h'(x)=0$, the value of $\exp h(x)$ is 
\begin{align*}
y = \exp \frac{1}{x+2} &[ x\log p +(p+x(p-1))\log(p+x(p-1)) \\ &\qquad- 2(x+1)\log p - (p-2)(x+1) \log(p+x(p-1))] = 
\\
&= \exp \frac{1}{x+2} [(-x-2)\log p + (2+x) \log(p+x(p-1))] \\ &=\frac{p+x(p-1)}{p} > p.
\end{align*}
Thus, when $x$ maximizes $\exp h(x)$, we have that $x = \frac{p}{p-1}(y-1)$. On the other hand, by exponentiating both sides of \eqref{Eq:h-derivative}, we see that $h(x)$ is maximized when $x$ is a solution to the polynomial equation
\begin{align}\label{polyforx} 
p^2(p+x(p-1))^{p-2} = ((p-1)(x+1))^{p-1}.
\end{align} 
Substituting $x = \frac{p}{p-1}(y-1)$ into \eqref{polyforx}, we find that the maximal value of $\exp h(x)$, i.e.\ the exponential growth rate of $\FIM(X)$, satisfies
\begin{align*}
p^2(p+p(y-1))^{p-2} = [ p(y-1) + (p-1) ] ^{p-1},
\end{align*} simplifying to $p^py^{p-2}=(py-1)^{p-1}$, as required. 
\end{proof}

Combining the estimates in Lemma~\ref{largerank} with Theorem~\ref{expg}, we have the following corollary on the behaviour of the growth rate for large ranks.

\begin{corollary}\label{corlargerank}
Let $\FIM(X)$ be the free inverse monoid on $X$ and write $y=y(|X|)$ for the exponential growth rate of $\FIM(X)$. Then
\begin{equation*}
y=2|X|-\frac{3}{4 |X|} +o\Bigl( \frac{1}{|X|}\Bigr).
\end{equation*} In particular, the exponential growth rate tends to twice the rank as $|X|\rightarrow \infty$.
\end{corollary}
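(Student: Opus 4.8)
The plan is to combine the two main results already in hand. By Theorem~\ref{expg}, writing $p = 2|X| - 1$, the growth rate $y = y(|X|)$ is exactly the largest real root of $p^p y^{p-2} - (py-1)^{p-1} = 0$; but this is precisely the polynomial whose largest real root was studied in Lemma~\ref{largerank}. Hence Lemma~\ref{largerank} applies verbatim and gives
\[
y = p + 1 - \frac{3}{2p} + o\!\left(\frac{1}{p}\right) \qquad \text{as } p \to \infty .
\]
Since $p = 2|X| - 1 \to \infty$ as $|X| \to \infty$, I would then simply substitute $p = 2|X| - 1$ into this asymptotic expansion and re-express the error term in terms of $|X|$.

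The remaining work is bookkeeping with the error. First, $p + 1 = 2|X|$, so the leading term is exactly $2|X|$. Second, I would expand
\[
\frac{3}{2p} = \frac{3}{2(2|X|-1)} = \frac{3}{4|X|}\cdot\frac{1}{1 - \frac{1}{2|X|}} = \frac{3}{4|X|} + O\!\left(\frac{1}{|X|^2}\right),
\]
so that $\frac{3}{2p} = \frac{3}{4|X|} + o(1/|X|)$. Third, since $2|X| - 1 \asymp |X|$, any quantity that is $o(1/p)$ is also $o(1/|X|)$. Putting these together yields
\[
y = 2|X| - \frac{3}{4|X|} + o\!\left(\frac{1}{|X|}\right),
\]
which is the claimed expansion; in particular $y/(2|X|) \to 1$, so the growth rate tends to twice the rank.

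There is no genuine obstacle here: all of the substance sits in Theorem~\ref{expg} and Lemma~\ref{largerank}, and the corollary is essentially a one-line substitution together with the elementary estimate $\frac{3}{2(2|X|-1)} = \frac{3}{4|X|} + o(1/|X|)$. The only point needing the slightest care is ensuring that the $o(\cdot)$ error supplied by Lemma~\ref{largerank}, which is stated in the variable $p$, is correctly translated into an $o(1/|X|)$ bound — but this is immediate from $p \asymp |X|$.
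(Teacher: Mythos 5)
Your proposal is correct and follows essentially the same route as the paper: apply Theorem~\ref{expg} to identify $y$ with the root studied in Lemma~\ref{largerank}, substitute $p=2|X|-1$, and absorb the difference $\frac{3}{2p}-\frac{3}{4|X|}=O(1/|X|^2)$ together with the $o(1/p)$ term into $o(1/|X|)$. Nothing is missing.
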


\begin{proof} It is straightforward to check that the corollary follows from Lemma~\ref{largerank} and Theorem~\ref{expg} as
\begin{equation*}
\frac{1}{p}=\frac{1}{2|X|-1}=\frac{1}{2|X|} + o\left( \frac{1}{|X|} \right),
\end{equation*} and anything $o(1/p)$ is automatically $o(1/|X|)$. \end{proof}

\begin{remark}\label{Rem:remark-irr}
The polynomial \eqref{Eq:main-polynomial} seems almost always to be irreducible over $\mathbb{Q}$. A curious exception is the rank $5$ ($p=9$) case, where the polynomial factorizes into irreducibles as
\[
-\left( (py)^2 + py + 1 \right) \left( (py)^6 - 90 (py)^5+ 117(py)^4 - 83(py)^3 + 36(py)^2 - 9py + 1 \right).
\]
We have verified
computationally that the polynomial is irreducible for all other ranks up to $2000$. It would be interesting to know if there is a conceptual reason why the polynomial is not irreducible for $\FIM_5$, and indeed whether the polynomial is irreducible in all other cases.
\end{remark}

\begin{example}
Consider $r = 2$, i.e.\ we wish to determine the exponential growth rate of $\FIM_2$, which should lie between $3$ and $4$. By Theorem~\ref{expg} this growth rate is the largest real root of the polynomial equation $27y - (3y - 1)^2 = 0$. Hence the exponential growth rate of $\FIM_2$ is exactly $\frac{11}{6} + \frac{\sqrt{13}}{2}$, or approximately $3.636$.
\end{example}

Finally, we pose the following question, which is a first step in expanding the work in this article into a program of extracting algebraic information purely from growth in the setting of inverse monoids: 

\begin{question}
Let $M$ be an inverse monoid generated by $r$ elements such that $M$ has the same growth rate as $\FIM_r$. Is $M$ necessarily free? 
\end{question}

The corresponding question for free groups has an affirmative answer \cite{Koubi1998}, but the proof uses the Nielsen--Schreier Theorem, the analogue of which fails for free inverse monoids.

\section*{Acknowledgements}
The authors thank Tara Brough and Marianne Johnson for helpful comments and discussions.

\bibliographystyle{amsalpha}
\bibliography{free-inverse-arxiv-July2024.bib}

\end{document}